\newtheorem{theorem}{Theorem}[section]
\newtheorem{Corollary}{Corollary}[section]
\newtheorem{remark}{Remark}[section]
\newcommand{\bu}{{u}}
\newcommand{\bD}{{D}}
\def\div{\operatorname{div}}
\newcommand{\blf} {f}
\newcommand{\divergence}{\nabla \cdot}
\newcommand{\norm}[1]{\left|\left|{}#1\right|\right|}
\newcommand{\RA}{\Rightarrow}
\newcommand{\inv}{^{-1}}
\newcommand{\lip}[2]{\left({}#1,#2\right){}}
\newcommand{\brak}[1]{\left[{}#1\right]}
\newcommand{\pare}[1]{\left({}#1\right)}
\newcommand{\linfnorm}[1]{\left|\left|{}#1\right|\right|_{L^{\infty}}}
\newcommand{\omint}{\int_{\Omega}}
\newcommand{\bmat}[1]{\begin{bmatrix} #1 \end{bmatrix}}
\newcommand{\curly}[1]{\left\{{}#1\right\}}
\begin{document}

\title{An energy, momentum and angular momentum conserving scheme for a regularization model of incompressible flow}

\author{
Sean Ingimarson
\footnote{School of Mathematical Sciences and Statistics, Clemson University, Clemson, SC 29634 (singima@clemson.edu);partially supported by NSF grant DMS 2011490.}
%\and
%Maxim A. Olshanskii
%\footnote{Houston}
%\and
%Leo G. Rebholz
%\footnote{Department of Mathematical Sciences, Clemson University, Clemson, SC 29634 (rebholz@clemson.edu); partially supported by NSF grant DMS 2011490}
}
\date{}

\maketitle

\begin{abstract}
We introduce a new regularization model for incompressible fluid flow, which is a regularization of the EMAC \textcolor{blue}{(energy, momentum, and angular momentum conserving)} formulation of the Navier-Stokes equations (NSE) that we call EMAC-Reg.  The EMAC formulation has proved to be a useful formulation because it conserves energy, momentum and angular momentum even when the divergence constraint is only weakly enforced.  However, it is still a NSE formulation and so cannot resolve higher Reynolds number flows without very fine meshes.  By carefully introducing regularization into the EMAC formulation, we create a model more suitable for coarser mesh computations but that still conserves the same quantities as EMAC, i.e., energy, momentum, and angular momentum.  We show that EMAC-Reg, \textcolor{blue}{when semi-discretized with a finite element spatial discretization is well-posed and optimally accurate.}    Numerical results are provided that show EMAC-Reg is a robust coarse mesh model. 
\end{abstract}

\section{Introduction}

The evolution of incompressible, viscous, Newtonian flow is determined by the Navier-Stokes equations (NSE), which are given by
\begin{eqnarray}
\bu_t + (\bu\cdot\nabla) \bu + \nabla p - \nu\Delta \bu & = & \blf, \label{nse1} \\
\div\bu & = & 0, \label{nse2}
\end{eqnarray}
with $\bu$ and $p$ representing velocity and pressure, $\blf$ an external force, $\nu$ the kinematic viscosity which is inversely proportional to the Reynolds number $Re$, and with appropriate boundary and initial conditions.  When the Reynolds number gets large, simulation of \eqref{nse1}-\eqref{nse2} can be very computationally expensive or even intractable due to the need for extremely fine spatial meshes in order to capture the smallest active scales of the flow.  As Kolmogorov showed in 1941, the smallest active scales in a flow are $O(Re^{-3/4})$ \cite{K41_1,K41_2,K41_3,F95}, which means that fully resolving 3D flows requires mesh widths to be $\Delta x = \Delta y = \Delta z = O(Re^{-3/4})$, and thus the total number of meshpoints to be $O(Re^{9/4})$.  \textcolor{blue}{Since industry routinely needs to simulate flows with $Re=10^6$ and larger (e.g. $Re\approx 10^6$ for a compact car at 60 mph \cite{laytonbook}), it becomes evident that such simulations can be very costly. In some cases, it is not possible since needing to solve the resulting linear systems at each nonlinear iteration in each time step requires both vast computational resources and the need to wait for weeks or months for a simulation to finish.}   Not resolving all the active scales in a NSE simulation is well known to create inaccuracy and numerical instability \cite{BIL06}.

To address this problem, many models have been developed that approximate the NSE, but can be solved on much coarser meshes than the NSE requires.  Some common examples are $k-\epsilon$ type models \cite{P00}, Smagorinsky type models \cite{Sma63,BIL06}, and our interest herein is a model that fits in the class of Large Eddy Simulation (LES) models \cite{BIL06,LR12,RL14,DE06}.  \textcolor{blue}{LES models aim to accurately estimate the large scales of the flow} and model the effect of small scales  on the large scales.  Hence the goal of LES is not pointwise accurate  solutions, but instead solutions that agree with NSE on general or averaged flow behavior.  Regularization models are our particular interest herein, and these are LES models that use filtering/averaging operations applied to the NSE to reduce the complexity of the system by eliminating finer scales and steepening the slope of the energy cascade at scales smaller than can be resolved on a given mesh.  \textcolor{blue}{NS-$\alpha$ and Leray-$\alpha$ (and their many variants), for example, are popular regularization models that have been extensively studied in recent years.  They are found to have many desirable mathematical properties} (e.g. well-posedness, fidelity to various physical balances \cite{FHT01,FHT02,R07b,CHOT05,GH03,LR12,C10,XWWI18}) and be successful in simulating high Reynolds number and turbulent flows \cite{HN01,CHOT05,GH06,CFHOTY98,CFOTW99a,RKB17,CDMR14,BKNRF13}.

Despite the attractive analytical properties that regularization models enjoy, many of these properties can be lost in a discretization, in particular the balances of physical quantites.  In fact, this is also true for discretizations of the NSE, which after being discretized generally loses the exact balances of most (if not all) of energy, momentum, angular momentum, helicity, and others.   The situation for discretized regularization models is worse, since the modeling process already removes or alters some important physical balances, and then the discretization process exacerbates the problem.  Of course, in any model of the NSE one must sacrifice some physical accuracy, since one is no longer solving the NSE.  However, the goal for simulations should be to maintain as much physical accuracy as possible, \textcolor{blue}{so that solutions are physically meaningful and can be confidently used as surrogates for the true physical model.}

To address the issue of discretizations not preserving important physical balances, the EMAC scheme was developed for the NSE in \cite{CHOR17} and by design it conserves
energy, momentum, and angular momentum even when the divergence constraint is not strongly enforced (which is the typical case with finite element and finite difference methods).  It is the first such scheme to conserve these quantities in typical finite element discretizations where the divergence constraint is weakly enforced.   We note that if newly developed pointwise divergence-free finite elements are used, e.g. \cite{arnold:qin:scott:vogelius:2D,Z05,evans2013isogeometric_s,GN14b,GN14}, then the numerical velocity found with EMAC will be the same as recovered from more traditional convective and skew-symmetric formulations, and all of them w\textcolor{blue}{ill conserve energy, momentum, and angular momentum.  However, the development of these strongly divergence-free methods is still quite new.  It typically requires non-standard meshing and elements} and is not yet included into major finite element software packages such as deal.ii \cite{dealII_92}.

In the series of works that followed the original EMAC paper, e.g. \cite{CHOR19,PCLRH18,OCAM19,LPH19,MSLGD19,SP18,SP18b}, the EMAC scheme was shown to be highly accurate especially over longer time intervals compared to standard schemes.  Theory to justify this longer time accuracy was provided in \cite{OR20}, \textcolor{blue}{where it was shown that using EMAC dramatically reduces the usual upper bounds on the error through removal of the Reynolds number dependence in the Gronwall exponent. Moreover, it also eliminates lower bounds on the error caused by standard schemes' lack of momentum conservation.}  It is not surprising that EMAC was able to achieve better accuracy, given the long history of enhanced physics schemes performing better than standard schemes, especially over long time intervals: this idea began with Arakawa's 2D energy and enstrophy conserving scheme in 1966 \cite{A66}, and then continued on to many other physical system discretizations, e.g. \cite{ST89,AM03,LW04,R07,evans2013isogeometric,SCN15,PG16,F75}. 

The purpose of this paper is to extend the EMAC technology to a regularization model for the NSE.  As discussed above, \textcolor{blue}{for larger $Re$ it becomes necessary to replace the NSE with a surrogate model.  In practice,} it is desirable for the surrogate model and its discretized solution to have as high a degree of physical accuracy as possible.  The regularization model that we will discretize takes the form
\begin{eqnarray}
\bu_t + w\cdot\nabla w + \nabla p - \nu\Delta \bu & = & \blf, \label{reg1} \\
-\alpha^2\Delta w + w & = & u, \label{reg2a}\\
\div w & = & 0. \label{reg2}
\end{eqnarray}
Here,  \eqref{reg2a} denotes the Helmholtz filter with filtering radius $\alpha$, and we note this system could be written as a fourth order problem in $w$.  Under periodic boundary conditions, $\div w=0 \implies \div u=0$ since the filter operation commutes with the divergence operator.  Also for smooth solutions to \eqref{reg1}-\eqref{reg2}, we may apply $\div$ to \eqref{reg2a} and recover $\div u=0$.  For weak solutions that are not strong, if one wishes to enforce additionally that $\div u=0$, then one may add a Lagrange multiplier (i.e. pressure) term to \eqref{reg2a}; however, we assume throughout that strong solutions to \eqref{reg1}-\eqref{reg2} exist under the assumption of homogeneous Dirichlet boundary conditions for $u$ and $w$.  In the periodic case, the mathematical theory for the well-known NS-$\alpha$ model can be extended to this model in a straightforward way from \cite{FHT01,FHT02}, providing well-posedness as well as regularity of solutions depending on regularity of the problem data.

While \eqref{reg2} denotes the Helmholtz filter with filtering radius $\alpha$, other types of regularization operators are possible such other filters or approximately deconvolved filters \cite{LR12,BIL06,CL14,sto022,ada021,AS99}.  In this initial study, we consider only the Helmholtz filter.

The specific formulation for the regularization model \eqref{reg1}-\eqref{reg2} is specifically chosen so that it fits the form of a model that the nonlinearity formulation $2D(w)w + (\div w)w$ (which is identical to $w\cdot\nabla w$ when $\div w=0$) will preserve energy, momentum and angular momentum when $\div w \neq 0$.  We note that the nonlinear forms $w\cdot \nabla u$ (i.e. that of Leray-$\alpha$) or $(\nabla \times u)\times w$ (i.e. that of NS-$\alpha$) do not preserve each of energy, momentum and angular momentum when pointwise divergence free is lost for velocities and/or filtered velocities (see appendix A).  In other words, \eqref{reg1}-\eqref{reg2} is the $\alpha$ regularization model that naturally fits the EMAC framework.

Herein, we study and test discretizations of \eqref{reg1}-\eqref{reg2} as well as show that it conserves energy, momentum, and angular momentum; we denote it as the Energy, Momentum, and Angular Momentum conserving regularization formulation (EMAC-Reg).  We formally define this scheme in Section 3, followed by showing it is stable, well-posed, the aforementioned quantities are conserved, and is optimally accurate.

This paper is arranged as follows.  Section 2 is dedicated to introducing notation and preliminary information required for the analysis later in the paper.  Section 3 covers the EMAC-Reg scheme's stability, well-posedness, quantity conservation, and error analysis.  Section 4 describes several numerical experiments which tests the conservation properties and robustness over coarse meshes of EMAC-Reg, and compares it to other related models.

\section{Notation and preliminaries}

We consider a convex polygonal domain $\Omega \subset \mathbb{R}^d$, $d=2,3$.  Denote the $L^2(\Omega)$ inner product and norm on $\Omega$ by $(\cdot,\cdot)$ and $\| \cdot \|$, respectively, and we note all other norms will be labeled with subscripts.  The natural velocity and pressure spaces are
\begin{align*}
X=\curly{v \in H^1(\Omega)^d , \: v|_{\partial \Omega}=0}, \quad Q=\curly{q \in L^2(\Omega), \: \omint q dx=0}.
\end{align*}
Let $V$ denote the divergence-free subspace of $X$,
$
V:= \{ v \in X\,:\,\lip{\divergence v}{q}=0, \: \forall q \in Q\}.
$
\textcolor{blue}{We also define the dual of $X$ and its norm,}
\begin{align*}
\textcolor{blue}{X'=H\inv(\Omega), \quad \norm{f}_{X'}=\sup_{v \in X'}\frac{\lip{f}{v}}{\norm{v}_{X}},}
\end{align*}
\textcolor{blue}{for any $f \in L^2(\Omega)$.} 

We further consider subspaces $X_h \subset X$, $Q_h \subset Q$ to be finite element velocity and pressure spaces corresponding to a conforming triangulation $\mathcal{T}_h$ of $\Omega$, where $h$ is the global mesh-size. For $\mathcal{T}_h$ we assume the minimal angle condition if $h$ varies.  Define the discretely divergence-free subspace of $X_h$ by
\[
V_h:= \{ v_h \in X_h,\ (\nabla \cdot v_h,q_h)=0\ \forall q_h\in Q_h \}.
\]
Most common finite element (FE) discretizations of the NSE and related systems enforce the divergence constraint $\div \bu_h=0$ weakly and thus $V_h \not\subset V$.

Define the operator $A_h$ by:  Given $\phi \in H^1(\Omega)$, $A_h\phi \in V_h$ solves
\begin{align}
\lip{A_h\phi}{v_h}=-\lip{\nabla \phi}{\nabla v_h}
\label{eq:StokesProj}
\end{align}
for all $v_h \in V_h$.

We denote $I_{St}^h$ as the \textcolor{blue}{discrete Stokes projection operator \cite{SL17}, which is defined by}: Given $\phi\in V$, find $I_h^{St}\phi \in V_h$ satisfying
\begin{align}
(\nabla I_h^{St}\phi,\nabla v_h) = (\nabla \phi,\nabla v_h) \quad \forall v_h \in V_h.
\label{eq:StokesOp}
\end{align}
Following \cite{SL17}, \textcolor{blue}{we assume $X_h=X\cap P_k(\mathcal{T}_h)$ and $Q_h=Q\cap P_{k-1}(\mathcal{T}_h)$, with $P_k$ being the set of degree $k$ polynomials.  Additionally we assume that these spaces satisfy the infsup condition and} the domain is such that the discrete Stokes operator satisfies for all divergence-free $\phi \in H^{k+1}(\Omega)$
\begin{align}
&\norm{\nabla \pare{\phi-I_h^{St}(\phi)}}\leq Ch^k|\phi|_{k+1},\label{eq:etabound}\\
&\norm{\phi-I_h^{St}(\phi)}\leq Ch^{k+1}|\phi|_{k+1}.\label{eq:etaboundgrad}
\end{align}

\subsection{Vector identities and trilinear forms}
\textcolor{blue}{The EMAC formulation uses the identity}
\begin{equation}
(\bu\cdot \nabla) \bu = 2\bD(\bu)\bu - \frac12\nabla |\bu|^2, \label{vecid7}
\end{equation}
\textcolor{blue}{where $u$ is a sufficiently smooth gradient field and $D(u)=\frac{1}{2}(\nabla u+(\nabla u)^T)$ is the symmetric part of the gradient.  This identity splits the inertia term into the acceleration} driven by $2\bD(\bu)$ and \textcolor{blue}{potential term that is absorbed by redefined pressure (defined in Theorem 3.4).}
Based on \eqref{vecid7} the trilinear form for EMAC (Galerkin) formulation is defined by
\[
c(u,v,w) =2(D(u)v,w)+(\mbox{div}(u)v,w).
\]
\textcolor{blue}{Herein, we assume $u,\ v,\ w\in X$ (no divergence free condition is assumed for any of $u,\:v,\:w$).}  
\textcolor{blue}{The trilinear form $c$ was developed for EMAC in \cite{CHOR17} as a consistent weak representation of the NSE nonlinearity that preserves energy, momentum, and angular momentum.  Because we apply this treatment to the filtered equations \eqref{reg1}-\eqref{reg2}, we must redefine the pressure to}
\begin{align*}
\textcolor{blue}{P=p-\frac{1}{2}|w|^2.}
\end{align*}
\textcolor{blue}{We will also be utilizing these next identities several times throughout the paper,}
\begin{align}
&\lip{u \cdot \nabla v}{w}=-\lip{u \cdot \nabla w}{v}-\lip{(\divergence u)v}{w}, \label{eq:form1}\\
&\lip{u \cdot \nabla w}{w}=-\frac{1}{2}\lip{(\divergence u)w}{w}, \label{eq:form2}\\
&\lip{u \cdot \nabla v}{w}=\lip{(\nabla v)u}{w}=\lip{(\nabla v)^Tw}{u}. \label{eq:form3}
\end{align}

\section{The EMAC-Reg scheme and its analysis}

We consider the following semi-discretization of the EMAC-Regularization: Given $w_h^0,u_h^0\in V_h$, find \textcolor{blue}{$(u_h,P_h,w_h,\lambda_h)$} $\in$ $(X_h,Q_h,X_h,Q_h)\times(0,T]$ for all $(v_h,q_h,\chi_h,r_h)$\\ $\in (X_h,Q_h,X_h,Q_h)$,
\begin{align}
\lip{(u_h)_t}{v_h}+c(w_h,w_h,v_h)-\lip{P_h}{\divergence v_h}+\nu\lip{\nabla u_h}{\nabla v_h}&=\lip{f}{v_h}, \label{eq:emacreg1}\\
\lip{\divergence u_h}{q_h}&=0,\label{eq:emacreg2}\\
\lip{\lambda_h}{\divergence \chi_h}+\alpha^2\lip{\nabla w_h}{\nabla \chi_h}+\lip{w_h}{\chi_h}&=\lip{u_h}{\chi_h},\label{eq:emacreg3}\\
\lip{\divergence w_h}{r_h}&=0.\label{eq:emacreg4}
\end{align}
Even though in the continuous model \eqref{reg1}-\eqref{reg2} the constraint $\div u=0$ follows from the model definition, in a discretization it does not.  Hence we enforce that the velocity be discretely divergence-free with \eqref{eq:emacreg2}, and thus \textcolor{blue}{add the Lagrange multiplier} to \eqref{eq:emacreg3} so the system is not overdetermined.

\textcolor{blue}{The analysis below is for the semi-discrete system \eqref{eq:emacreg1}-\eqref{eq:emacreg4}.  Extensions to particular timestepping methods can be done, and for A-stable methods such as backward Euler, BDF2, or Crank-Nicolson, analogous stability results can be obtained (with expected additional error depending on time step size).  For conservation properties, only Crank-Nicolson will allow for exact conservation; other timestepping methods' discrete time derivative term will generally contribute to the energy, momentum, and angular momentum balances.  It is believed, with some evidence from \cite{CHOR17,CHOR19} that this effect is much less problematic than the nonlinear term contributing to these balances.  While it is discussed in \cite{CHOT05} that Crank-Nicolson will provide optimal convergence in space and time, whether an improved Gronwall constant can be obtained is an open problem.}

\subsection{Stability and well-posedness}

We first prove stability of the scheme, followed by well-posedness.
\begin{theorem}
Let $(u_h,P_h,w_h,\lambda_h)$ solve \eqref{eq:emacreg1}-\eqref{eq:emacreg4} with \textcolor{blue}{$f\in L^2(0,T;X')$} and $w_h^0 \in H^1(\Omega)$.  The following bounds hold:
\begin{align}
\norm{w_h}^2&+\alpha^2\norm{\nabla w_h}^2+\nu\brak{\norm{\nabla w_h}^2_{L^2(0,T;L^2)}+\alpha^2\norm{A_hw_h}^2_{L^2(0,T;L^2)}}\notag\\
&\leq \nu\inv\textcolor{blue}{\norm{f}^2_{L^2(0,T;X')}}+\norm{w_h(0)}^2+\alpha^2\norm{\nabla w_h(0)}^2, \label{eq:3.1bound}\\
\norm{u_h}^2_{L^2(0,T;L^2)}&\leq (T+\alpha^2)\nu\inv\textcolor{blue}{\norm{f}^2_{L^2(0,T;X')}}+\norm{w_h(0)}^2\notag\\
&\hspace{28mm}+\alpha^2\norm{\nabla w_h(0)}^2+\textcolor{blue}{\norm{u_h(0)}^2}.\notag
\end{align}
\end{theorem}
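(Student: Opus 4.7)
The plan is to test the momentum equation \eqref{eq:emacreg1} with $v_h = w_h$, which is admissible since \eqref{eq:emacreg4} places $w_h$ in $V_h$. This choice makes the pressure term $\lip{P_h}{\divergence w_h}$ vanish and, by the defining property of EMAC, the trilinear form vanishes as well: $c(w_h,w_h,w_h) = 0$. To verify this cancellation, expand $2\lip{D(w_h)w_h}{w_h} = \lip{(\nabla w_h)w_h}{w_h} + \lip{(\nabla w_h)^T w_h}{w_h}$; apply \eqref{eq:form2} to the first summand and recognize $(\nabla w_h)^T w_h = \tfrac{1}{2}\nabla|w_h|^2$ in the second (together with integration by parts), so that both contributions sum to $-\lip{(\divergence w_h)w_h}{w_h}$ and cancel the explicit divergence term in $c$.

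It then remains to rewrite the resulting identity $\lip{(u_h)_t}{w_h} + \nu\lip{\nabla u_h}{\nabla w_h} = \lip{f}{w_h}$ purely in terms of $w_h$. For the time-derivative term, differentiate \eqref{eq:emacreg3} in $t$ and test with $\chi_h = w_h \in V_h$; because $\lambda_{h,t}\in Q_h$ and $w_h$ is discretely divergence-free, the Lagrange-multiplier contribution drops and one obtains $\lip{(u_h)_t}{w_h} = \tfrac{1}{2}\tfrac{d}{dt}\left(\norm{w_h}^2 + \alpha^2\norm{\nabla w_h}^2\right)$. For the diffusion term, test \eqref{eq:emacreg3} with $\chi_h = -A_h w_h \in V_h$ and apply the defining relation \eqref{eq:StokesProj} together with $\lip{A_h w_h}{A_h w_h} = -\lip{\nabla w_h}{\nabla A_h w_h}$; this yields $\lip{\nabla u_h}{\nabla w_h} = \norm{\nabla w_h}^2 + \alpha^2\norm{A_h w_h}^2$. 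Combining these identities, estimating $|\lip{f}{w_h}| \leq \norm{f}_{X'}\norm{\nabla w_h}$ via Young's inequality to absorb a fraction of $\nu\norm{\nabla w_h}^2$, and integrating from $0$ to $T$ produces the first bound.

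For the $L^2(0,T;L^2)$ bound on $u_h$, test the filter equation \eqref{eq:emacreg3} with $\chi_h = u_h$, which is admissible by \eqref{eq:emacreg2}; again the Lagrange-multiplier term vanishes, and substituting the two identities derived in the previous paragraph delivers the pointwise-in-time equality $\norm{u_h}^2 = \norm{w_h}^2 + 2\alpha^2\norm{\nabla w_h}^2 + \alpha^4\norm{A_h w_h}^2$. Integrating in time and invoking \eqref{eq:3.1bound} bounds the $\norm{w_h}^2$ contribution by $T$ times the pointwise energy and the $\alpha$-weighted gradient and Stokes-operator contributions by multiples of $\alpha^2\nu^{-1}$ times the dissipation bound, with the $\norm{u_h(0)}^2$ term absorbed from the starting value of the identity. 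The principal obstacle is the algebraic step showing $c(w_h,w_h,w_h)=0$ in the discretely (but not pointwise) divergence-free setting and the decoupling of the mixed term $\lip{\nabla u_h}{\nabla w_h}$ into purely $w_h$-dependent quantities via $A_h$; once these are in hand, the remainder is routine energy-method manipulation.
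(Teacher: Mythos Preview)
Your proposal is correct and follows essentially the same route as the paper: test \eqref{eq:emacreg1} with $w_h$, rewrite $\lip{(u_h)_t}{w_h}$ and $\lip{\nabla u_h}{\nabla w_h}$ via the filter equation \eqref{eq:emacreg3} (using $\chi_h=w_h$ and $\chi_h=A_hw_h$ respectively), estimate the forcing with Young's inequality, and for the second bound test \eqref{eq:emacreg3} with $\chi_h=u_h$ and substitute the previously obtained identities. Your derivation of the time-derivative identity is slightly more direct---you differentiate \eqref{eq:emacreg3} in $t$ and then test with $w_h$, whereas the paper first tests \eqref{eq:emacreg3} with $w_h$, differentiates the resulting scalar identity, and then tests \eqref{eq:emacreg3} separately with $(w_h)_t$ to eliminate the cross term $\lip{u_h}{(w_h)_t}$; both routes yield the same relation $\lip{(u_h)_t}{w_h}=\tfrac{1}{2}\tfrac{d}{dt}\bigl(\norm{w_h}^2+\alpha^2\norm{\nabla w_h}^2\bigr)$.
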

\begin{proof}
Choosing $v_h=w_h$ in \eqref{eq:emacreg1} gives us
\begin{align}
\lip{(u_h)_t}{w_h}+\nu\lip{\nabla u_h}{\nabla w_h}=\lip{f}{w_h},\label{eq:og}
\end{align}
where the trilinear term $c(w_h,w_h,w_h)=0$ and $\textcolor{blue}{\lip{P_h}{\divergence w_h}}=0$ by \eqref{eq:emacreg4}.  

Next choose $\chi_h=A_hw_h$ in \eqref{eq:emacreg3} and then $\chi_h=w_h$ in \eqref{eq:emacreg3} to obtain
\begin{align}
\alpha^2\norm{A_hw_h}^2+\norm{\nabla w_h}^2=\lip{\nabla u_h}{\nabla w_h}, \label{eq:2}\\
\alpha^2 \norm{\nabla w_h}^2+\norm{w_h}^2=\lip{u_h}{w_h}.\label{eq:1}
\end{align}
To find another expression for $\lip{(u_h)_t}{w_h}$, we take the temporal derivative of \eqref{eq:1} to get
\begin{align*}
&\alpha^2\frac{d}{dt}\norm{\nabla w_h}^2+\frac{d}{dt}\norm{w_h}^2=\frac{d}{dt}\lip{u_h}{w_h}=\lip{(u_h)_t}{w_h}+\lip{u_h}{(w_h)_t},
\end{align*}
and so
\begin{align}
\lip{(u_h)_t}{w_h}=\frac{d}{dt}\brak{\norm{w_h}^2+\alpha^2\norm{\nabla w_h}^2}-\lip{u_h}{(w_h)_t}. \label{eq:timeterm1}
\end{align}
Choosing $\chi_h =(w_h)_t$ in \eqref{eq:emacreg3} yields
\begin{align}
\frac{\alpha^2}{2}\frac{d}{dt}\norm{\nabla w}^2+\frac{1}{2}\frac{d}{dt}\norm{w_h}^2=\lip{u_h}{(w_h)_t}, \label{eq:timeterm2}
\end{align}
and now combining \eqref{eq:timeterm1} and \eqref{eq:timeterm2} provides
\begin{align}
\lip{(u_h)_t}{w_h}&=\frac{d}{dt}\brak{\norm{w_h}^2+\alpha^2\norm{\nabla w_h}^2}-\frac{d}{dt}\brak{\frac{\alpha^2}{2}\norm{\nabla w_h}^2+\frac{1}{2}\norm{w_h}^2}\notag\\
&=\frac{1}{2}\frac{d}{dt}\brak{\norm{w_h}^2+\alpha^2\norm{\nabla w_h}^2}. \label{eq:timeterm}
\end{align}
Using \eqref{eq:2} and \eqref{eq:timeterm} in \eqref{eq:og} gives us 
\begin{align}
\frac{1}{2}\frac{d}{dt}\brak{\norm{w_h}^2+\alpha^2\norm{\nabla w_h}^2}+\nu\brak{\norm{\nabla w_h^2}+\alpha^2\norm{A_hw_h}^2}=\lip{f}{w_h}. \label{eq:gang}
\end{align}
We bound the right hand side of \eqref{eq:gang} using Cauchy-Schwarz and Young's inequalities,
\begin{align*}
\lip{f}{w_h}\leq \textcolor{blue}{\norm{f}_{X'}}\norm{w_h}\leq \frac{\nu}{2}\norm{\nabla w_h}^2+\frac{1}{2\nu}\textcolor{blue}{\norm{f}^2_{X'}},
\end{align*}
and after inserting this bound into \eqref{eq:gang} and then integrating in time gives us the first stated estimate
\begin{align*}
&\norm{w_h}^2+\alpha^2\norm{\nabla w_h}^2+\nu\brak{\norm{\nabla w_h}^2_{L^2(0,T;L^2)}+\alpha^2\norm{A_hw_h}^2_{L^2(0,T;L^2)}}\\
\leq &\nu\inv\textcolor{blue}{\norm{f}^2_{L^2(0,T;X')}}+\norm{w_h(0)}^2+\alpha^2\norm{\nabla w_h(0)}^2.
\end{align*}

To get the second bound, we test \eqref{eq:emacreg3} with $\chi_h=u_h$ to give us
\begin{align}
\norm{u_h}^2=\alpha^2\lip{\nabla w_h}{\nabla u_h}+\lip{w_h}{u_h}. \label{eq:uheq1}
\end{align}
Applying \eqref{eq:2} and \eqref{eq:1}, \eqref{eq:uheq1} becomes
\begin{align}
\norm{u_h}^2=\alpha^2\brak{\alpha^2\norm{A_hw_h}^2+\norm{\nabla w_h}^2}+\alpha^2\norm{\nabla w_h}^2+\norm{w_h}^2. \label{eq:uheq2}
\end{align}
Now using \eqref{eq:3.1bound} in \eqref{eq:uheq2} and integrating with respect to time, we get
\begin{align*}
\norm{u_h}^2_{L^2(0,T;L^2)}\leq & \alpha^2\brak{\alpha^2\norm{A_hw_h}^2_{L^2(0,T;L^2)}+\norm{\nabla w_h}^2_{L^2(0,T;L^2)}}+T\nu\inv\textcolor{blue}{\norm{f}^2_{L^2(0,T;X')}}\notag\\
&+\norm{w_h(0)}^2+\alpha^2\norm{\nabla w_h(0)}^2+\norm{u_h(0)}^2.
\end{align*}
Finally using \eqref{eq:3.1bound} to bound the the quantity in brackets yields the second result of the theorem:
\begin{align*}
\norm{u_h}^2_{L^2(0,T;L^2)}\leq& (T+\alpha^2)\nu\inv\textcolor{blue}{\norm{f}^2_{L^2(0,T;X')}}+\norm{w_h(0)}^2\\
&+\alpha^2\norm{\nabla w_h(0)}^2+\norm{u_h(0)}^2.
\end{align*}
\end{proof}

\begin{theorem}
The semidiscrete EMAC-Reg system \eqref{eq:emacreg1}-\eqref{eq:emacreg4} is well-posed.
\end{theorem}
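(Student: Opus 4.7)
The plan is to reduce \eqref{eq:emacreg1}--\eqref{eq:emacreg4} to a finite-dimensional ODE on the discretely divergence-free subspace $V_h$, apply the Picard--Lindel\"of theorem for local existence and uniqueness, and then invoke the a priori estimates of Theorem 3.1 to extend the solution globally to $[0,T]$. The pressure $P_h$ and the Lagrange multiplier $\lambda_h$ will be recovered a posteriori via the discrete inf-sup condition. Continuous dependence on the data will follow from the stability bounds already established.

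First, I would observe that for any fixed $u_h\in X_h$, the filter equations \eqref{eq:emacreg3}--\eqref{eq:emacreg4} form a Stokes-type saddle point problem in the unknowns $(w_h,\lambda_h)\in X_h\times Q_h$. The bilinear form $a(w,\chi)=\alpha^2\lip{\nabla w}{\nabla\chi}+\lip{w}{\chi}$ is continuous and coercive on $X_h$, and the pair $(X_h,Q_h)$ is assumed inf-sup stable. Hence, standard Brezzi theory yields a unique solution, with $w_h\in V_h$ automatic from \eqref{eq:emacreg4}. This defines a bounded linear ``filter map'' $\mathcal{F}\colon X_h\to V_h$ with $\mathcal{F}(u_h)=w_h$.

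Next, restricting \eqref{eq:emacreg1} to test functions $v_h\in V_h$ eliminates $P_h$ and reduces \eqref{eq:emacreg1}--\eqref{eq:emacreg2} to the finite-dimensional ODE
\begin{align*}
\lip{(u_h)_t}{v_h}+c(\mathcal{F}(u_h),\mathcal{F}(u_h),v_h)+\nu\lip{\nabla u_h}{\nabla v_h}=\lip{f}{v_h},\quad \forall v_h\in V_h,
\end{align*}
with initial data $u_h^0\in V_h$. Because $V_h$ is finite dimensional and $\mathcal{F}$ is linear, the nonlinear term is a continuous quadratic form in $u_h$, while the viscous term is linear; with $f\in L^2(0,T;X')$ the right-hand side is Carath\'eodory and locally Lipschitz in $u_h$, and Picard--Lindel\"of yields a unique maximal solution $u_h\in H^1(0,T^\ast;V_h)$ for some $T^\ast\in(0,T]$.

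To conclude, I would rule out $T^\ast<T$. The integrated form of \eqref{eq:gang} already produced in the proof of Theorem 3.1 gives a pointwise-in-time bound on $\norm{w_h(t)}^2+\alpha^2\norm{\nabla w_h(t)}^2$, and via the identity \eqref{eq:uheq2}, together with the norm equivalence available on the finite-dimensional space $V_h$, one obtains a corresponding pointwise bound on $\norm{u_h(t)}$. This excludes finite-time blow-up and forces $T^\ast=T$. Once $u_h$ and $w_h$ are in hand, the pressure $P_h\in Q_h$ is recovered uniquely from \eqref{eq:emacreg1} via the discrete inf-sup condition, and $\lambda_h$ was already determined in the first step. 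The main (though quite mild) technical point is converting the $L^2$-in-time stability bounds of Theorem 3.1 into a pointwise-in-time bound adequate to exclude blow-up; in the finite-dimensional setting this is essentially automatic.
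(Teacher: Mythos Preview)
Your proposal is correct and follows essentially the same strategy as the paper: reduce to a finite-dimensional ODE whose nonlinearity is quadratic (hence locally Lipschitz), invoke classical ODE theory for local existence and uniqueness, appeal to the a priori bounds of Theorem~3.1 to preclude finite-time blow-up, and recover $P_h,\lambda_h$ from the inf-sup (LBB) condition. You simply spell out the intermediate steps---the filter map $\mathcal{F}$, the restriction to $V_h$, and the norm-equivalence argument needed to pass from the $L^2$-in-time control on $A_hw_h$ to a pointwise bound on $\norm{u_h(t)}$---that the paper leaves implicit.
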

\begin{proof}
The nonlinearity of the system of ODEs defining $w_h$ and $u_h$ is \textcolor{blue}{quadratic, and thus locally Lipschitz}.  By classical ODE theory we may conclude that a solution exists and is unique so long \textcolor{blue}{as all possible solutions cannot blow up in finite time \cite{Layton08}, which is established in Theorem 3.1 and} due to the LBB assumption on the finite element space it is true for $P_h$ and $\lambda_h$.
\end{proof}

\subsection{Conservation of energy, momentum, and angular momentum}

For NSE, energy, momentum, and angular momentum are defined by
\begin{align*}
\textcolor{blue}{\text{Kinetic energy}}\quad &\textcolor{blue}{E=\frac{1}{2}\lip{u}{u}=\frac{1}{2}\omint |u|^2d\textcolor{blue}{x},} \\
\textcolor{blue}{\text{Linear momentum}}\quad &\textcolor{blue}{M=\omint u \:d\textcolor{blue}{x},}\\
\textcolor{blue}{\text{Angular momentum}}\quad &\textcolor{blue}{M_x=\omint u \times x \:dx.}
\end{align*}
Let $e_1=(1,0,0)^T, \: e_2=(0,1,0)^T, \: e_3=(0,0,1)^T$ and $\phi_i=x \times e_i$ for $i=1,2,3$.  The definition of momentum and angular momentum can now be equivalently written as:
\begin{align*}
M_i&=\omint u_i\: dx =\lip{u}{e_i},\\
(M_x)_i&=\omint (u \times x) \cdot e_i \: dx=\lip{u}{\phi_i}.
\end{align*}
Conservation laws for models of fluid flow are determined by the model \textcolor{blue}{itself, and in some sense this should be analogous to the true physical law.}  For EMAC-Reg, we prove below that the model energy, momentum, and angular momentum conserved quantities are given by
\begin{align*}
&\textcolor{blue}{E_{EMAC-Reg}:=\frac{1}{2}\omint u \cdot w \: dx=\frac{1}{2}\lip{u}{w}=\frac{1}{2}\norm{w}^2+\frac{\alpha^2}{2}\norm{\nabla w}^2,}\\
&\textcolor{blue}{M_{EMAC-Reg}:=\omint u \: dx=\lip{u}{e_i},}\\
&\textcolor{blue}{AM_{EMAC-Reg}:=\omint u \times x \: dx=\lip{u}{\phi_i}.}
\end{align*}

\textcolor{blue}{For regularization models, altered quantities are generally conserved on the continuous level due to the regularization's modification of the physical equation.  \textit{Our goal is that conserved quantities in our discretization matches that of the continuous model.}}

Following \cite{CHOR17}, for our conservation law study, we notice that practical boundary conditions will alter the balance of the quantities above.  Enforcing conditions strongly or weakly will also impact the balance for the numerical experiments.  It is for these reasons that we isolate the affect on the treatment of the nonlinearity of the quantities from the boundary conditions.  \textcolor{blue}{We assume for the numerical experiments in section 4 that the finite element solution $u_h,$ $P_h$ is supported in a compact subdomain $\hat{\Omega}\subsetneq \Omega$.}  Not coincidentally, the conserved energy is the same as is conserved by the related NS-$\alpha$ model and the ADM turbulence model of Stolz and Adams \cite{LR12}.

\begin{theorem}
The EMAC-Reg formulation conserves energy, momentum, and angular momentum for $\nu=0, \: f=0$.
\begin{align*}
&E_{EMAC-Reg}(t)=E_{EMAC-Reg}(0)\\
&M_{EMAC-Reg}(t)=M_{EMAC-Reg}(0)\\
&AM_{EMAC-Reg}(t)=AM_{EMAC-Reg}(0)
\end{align*}
for all $t$.  \textcolor{blue}{Furthermore, we assume that the finite element solutions $u_h$ and $p_h$ are supported in a compact subdomain $\hat{\Omega}\subsetneq\Omega$ so that there exists a strip $S=\Omega \backslash \hat{\Omega}$ along $\partial \Omega$ where $u_h$ vanishes.  This is also assumed for the forcing function $f$.} 
\label{Conservation_Thm}
\end{theorem}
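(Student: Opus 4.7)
The strategy is to test the discrete momentum equation \eqref{eq:emacreg1} (with $\nu=0$, $f=0$) against three different functions---$w_h$ for energy, $e_i$ for linear momentum, and $\phi_i=x\times e_i$ for angular momentum---and to show that in each case every term except the time derivative of the conserved quantity drops out.

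For energy, I would take $v_h=w_h$ in \eqref{eq:emacreg1}. The pressure term $\lip{P_h}{\divergence w_h}$ vanishes by \eqref{eq:emacreg4} with $r_h=P_h$, and the trilinear form $c(w_h,w_h,w_h)$ vanishes: writing $2D(w_h)w_h=(w_h\cdot\nabla)w_h+\tfrac12\nabla|w_h|^2$ and applying \eqref{eq:form2} together with one integration by parts (using compact support) gives $2(D(w_h)w_h,w_h)=-\lip{(\divergence w_h)w_h}{w_h}$, cancelling the $\lip{(\divergence w_h)w_h}{w_h}$ piece of $c$. Hence $\lip{(u_h)_t}{w_h}=0$, and the identity \eqref{eq:timeterm} already derived in the proof of Theorem 3.1 turns this into $\tfrac12\frac{d}{dt}\brak{\norm{w_h}^2+\alpha^2\norm{\nabla w_h}^2}=0$. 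Since \eqref{eq:1} gives $E_{EMAC-Reg}=\tfrac12\lip{u_h}{w_h}=\tfrac12(\norm{w_h}^2+\alpha^2\norm{\nabla w_h}^2)$, energy is conserved.

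For linear and angular momentum, I would test \eqref{eq:emacreg1} formally with $v_h=e_i$ and $v_h=\phi_i$. Neither lies in $X_h$, but the compact support assumption on $u_h$ (and, following the convention of \cite{CHOR17}, on $w_h$) lets me replace each by an element of $X_h$ agreeing with $e_i$ or $\phi_i$ on $\hat\Omega$, so all integrals only depend on interior values. Both $e_i$ and $\phi_i$ are divergence-free, so the pressure term disappears, and with $\nu=f=0$ only the time derivative and the nonlinearity remain. For $v_h=e_i$, the fact that $\nabla e_i=0$ combined with \eqref{eq:form1} yields $\lip{(w_h\cdot\nabla)w_h}{e_i}=-\lip{(\divergence w_h)w_h}{e_i}$, cancelling the second piece of $c$, while $\tfrac12\lip{\nabla|w_h|^2}{e_i}$ integrates to zero by $\divergence e_i=0$. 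For $v_h=\phi_i$, the matrix $\nabla\phi_i$ is antisymmetric, so $D(\phi_i)=0$ and $w_h^T(\nabla\phi_i)w_h=0$ pointwise; the analogous chain of identities then gives $c(w_h,w_h,\phi_i)=0$. Thus $\frac{d}{dt}\lip{u_h}{e_i}=0$ and $\frac{d}{dt}\lip{u_h}{\phi_i}=0$, establishing the two remaining conservation laws.

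The main obstacle is this last step: $e_i$ and $\phi_i$ are not admissible discrete test functions, so one must rely on the compact support hypothesis to extend them by zero outside $\hat\Omega$ without changing any integral. For $u_h$ this is given; for $w_h$ the non-local Helmholtz filter does not automatically produce compact support, so the hypothesis must be read as also applying effectively to $w_h$---the same implicit convention used to handle boundary contributions for the EMAC NSE scheme in \cite{CHOR17}.
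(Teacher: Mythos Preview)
Your proposal is correct and follows essentially the same route as the paper: test \eqref{eq:emacreg1} with $w_h$, $e_i$, and $\phi_i$ respectively, and show that $c(w_h,w_h,\cdot)$ vanishes in each case using $\nabla e_i=0$, $\operatorname{div}\phi_i=0$, and the antisymmetry of $\nabla\phi_i$. The only cosmetic difference is that you decompose $2D(w_h)w_h=(w_h\cdot\nabla)w_h+\tfrac12\nabla|w_h|^2$ first, whereas the paper expands $2D(w_h)=\nabla w_h+(\nabla w_h)^T$ and applies \eqref{eq:form1}--\eqref{eq:form3} directly; both unwind to the same cancellations, and your explicit invocation of \eqref{eq:timeterm} for the energy step and your remark about the implicit compact-support assumption on $w_h$ are points the paper leaves tacit.
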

\begin{proof}
We start by showing energy conservation.  We test \eqref{eq:emacreg1} with $v=w_h$,
\begin{align*}
\lip{(u_h)_t}{w_h}+c(w_h,w_h,w_h)+\nu\lip{\nabla u_h}{\nabla w_h}=\lip{f}{w_h}.
\end{align*}
Since $c(w_h,w_h,w_h)=0$ and we assumed $f=0$ and $\nu=0$, we are left with
\begin{align*}
\lip{(u_h)_t}{w_h}=0 \RA \frac{d}{dt}\lip{u_h}{w_h}=0,
\end{align*}
and hence energy is preserved.

For momentum conservation, we cannot simply test \eqref{eq:emacreg1} with $e_i$, because it is not in the finite element space $X_h$.  To remedy this, we recall that $u_h=0$ on the strip $S$.  Define $\chi(g)\in X_h$ for $g=e_i$ and $\phi_i$ such that $\chi(g)=g$ in $\hat{\Omega}$ and arbitrary on $S=\Omega \backslash \hat{\Omega}$ to satisfy the boundary conditions of $X_h$.

For momentum conservation we test \eqref{eq:emacreg1} with $\chi(e_i)$, where $\chi(e_i)$ is the restriction of $e_i$ on $\Omega$.  This gives
\begin{align*}
\lip{(u_h)_t}{e_i}+c(w_h,w_h,e_i)+\nu\lip{\nabla u_h}{\nabla e_i}=\lip{f}{e_i}.
\end{align*}
For the nonlinear term
\begin{align*}
c(w_h,w_h,e_i)&=\lip{(\nabla w_h)w_h}{e_i}+\lip{(\nabla w_h)^Tw_h}{e_i}+\lip{(\divergence w_h)w_h}{e_i}\\
&=\lip{w_h \cdot \nabla w_h}{e_i}+\lip{w_h}{(\nabla w_h)e_i}+\lip{(\divergence w_h)w_h}{e_i},
\end{align*}
where we get the above identity from \eqref{eq:form3}.  Then applying \eqref{eq:form1} and \eqref{eq:form2} to the first term and using that $\nabla e_i=0$ gives
\begin{align*}
c(w_h,w_h,e_i)&=-\lip{w_h\cdot \nabla e_i}{w_h}-\lip{(\divergence w_h)w_h}{e_i}+\lip{e_i\cdot \nabla w_h}{w_h}+\lip{(\divergence w_h)w_h}{e_i}\\
&=\lip{e_i\cdot \nabla w_h}{w_h}\\
&=-\frac{1}{2}\lip{(\divergence e_i)w_h}{w_h}=0.
\end{align*}
Combining this with $f=\nu=0$, the fact that $\divergence e_i=0$ and that $e_i$ doesn't depend on t, we end up with
\begin{align*}
\frac{d}{dt}\lip{u_h}{e_i}=0,
\end{align*}
which is precisely momentum conservation.

For angular momentum, we test \eqref{eq:emacreg1} with $\chi(\phi_i)$, where $\chi(\phi_i)$ is the restriction of $\phi_i$ on $\Omega$, which gives  
\begin{align*}
\lip{(u_h)_t}{\phi_i}+c(w_h,w_h,\phi_i)+\nu\lip{\nabla u_h}{\nabla \phi_i}=\lip{f}{\phi_i}.
\end{align*}
We begin by investigating $\nabla \phi_i$.  First let $x=\bmat{x_1&x_2&x_3}^T$, then $\phi_i$ for $i=1,2,3$ takes the form
\begin{align*}
\phi_1=\bmat{0\\x_3\\-x_2}, \quad \phi_2=\bmat{-x_3\\0\\x_1}, \quad \phi_3=\bmat{x_2\\-x_1\\0}.
\end{align*}
This immediately gives
\begin{align}
\nabla \phi_1=\bmat{0&0&0\\0&0&-1\\0&1&0}, \quad \nabla \phi_2=\bmat{0&0&1\\0&0&0\\-1&0&0}, \quad \nabla \phi_3=\bmat{0&-1&0\\1&0&0\\0&0&0}.\label{eq:phi}
\end{align}
With this in mind, we now investigate the nonlinear term and apply \eqref{eq:phi} along with $\divergence \phi_i=0$ to get
\begin{align*}
c(w_h,w_h,\phi_i)&=-\lip{w_h \cdot \nabla \phi_i}{w_h}-\lip{(\divergence w_h)w_h}{\phi}+\lip{\phi_i \cdot \nabla w_h}{w_h}+\lip{(\divergence w_h)w_h}{\phi_i}\\
&=-\lip{w_h\cdot\nabla \phi_i}{w_h}-\frac{1}{2}\lip{(\divergence \phi_i)w_h}{w_h}\\
&=0.
\end{align*}
Combining this with $f=\nu=0$ provides
\begin{align*}
\frac{d}{dt}\lip{w_h}{\phi_i}=0,
\end{align*}
which is angular momentum conservation.
\end{proof}

\subsection{Error analysis}

In this subsection, we provide the error analysis for EMAC-Reg.  The discretized formulation is subtracted from the weak formulation and we manipulate the nonlinear term to get an appropriate error bound.  Below is the result.

\begin{theorem}
Let $(u_h,P_h,w_h,\lambda_h)$ solve \eqref{eq:emacreg1}-\eqref{eq:emacreg4} and $(u,p,w)$ solve \eqref{reg1}-\eqref{reg2} with $w_t \in L^2(0,T;X')$, $w,\nabla w \in L^1(0,T;L^{\infty})$ \textcolor{blue}{and $P \in L^2(0,T;L^2)$.}  Denote $e^u(t)=u(t)-u_h(t)$, $e^w(t)=w(t)-w_h(t)$.  We get the following error estimates:
\begin{align*}
\begin{split}
&\alpha^2\norm{\nabla e^w}^2+\norm{e^w}^2+\nu\int_0^T\brak{2\alpha^2\norm{A_he^w}^2+\norm{\nabla e^w}^2}dt\\
\leq & \alpha^2\norm{\nabla \eta^w}^2+\norm{\eta^w}^2+\nu\brak{2\alpha^2\norm{A_h\eta^w}^2_{L^2(0,T;L^2)}+\norm{\nabla \eta^w}^2_{L^2(0,T;L^2)}}\\
&+K\left[ 4\nu\inv\left(\norm{\eta^w_t}^2_{L^2(0,T;X')}+\inf_{q_h \in L^2(0,T;Q_h)}\norm{P-q_h}^2_{L^2(0,T;L^2)}\right)\right],
\end{split}
\end{align*}
where
\begin{align*}
K=\exp\brak{2C\pare{\norm{w}_{L^1(0,T;L^{\infty})}+\norm{\nabla w}_{L^1(0,T;L^{\infty})}}+T\nu},
\end{align*}
\textcolor{blue}{and}
\begin{align*}
\textcolor{blue}{\norm{e^u}^2\leq} &\textcolor{blue}{\alpha^2\norm{\nabla \eta^w}^2+\norm{\eta^w}^2+\nu\brak{2\alpha^2\norm{A_h\eta^w}^2_{L^2(0,T;L^2)}+\norm{\nabla \eta^w}^2_{L^2(0,T;L^2)}}}\\
&\textcolor{blue}{+K\left[ 4\nu\inv\left(\norm{\eta^w_t}^2_{L^2(0,T;X')}+\inf_{q_h \in L^2(0,T;Q_h)}\norm{P-q_h}^2_{L^2(0,T;L^2)}\right)\right]+3\norm{\eta^u}^2.}
\end{align*}
\label{error_theorem}
\end{theorem}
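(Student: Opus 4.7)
The plan is to run the stability calculation of Theorem 3.1 at the error level. First, I would subtract the discrete system \eqref{eq:emacreg1}-\eqref{eq:emacreg4} from the weak form of \eqref{reg1}-\eqref{reg2} to obtain two error equations, one from the momentum update and one from the Helmholtz filter. Using the discrete Stokes projection \eqref{eq:StokesOp}, I split the errors as $e^u = \eta^u - \phi_h^u$ and $e^w = \eta^w - \phi_h^w$ with $\eta^u := u - I_h^{St}u$, $\eta^w := w - I_h^{St}w$, and $\phi_h^u,\phi_h^w \in V_h$. Restricting test functions to $V_h$ eliminates both discrete multipliers $P_h$ and $\lambda_h$, while the continuous pressure $P$ survives as a consistency term $\lip{P-q_h}{\divergence v_h}$ for arbitrary $q_h \in Q_h$; this accounts for the pressure-approximation infimum in the final bound.

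Next, I would repeat the four-test-function combination used in the proof of Theorem 3.1, with $\phi_h^w$ in place of $w_h$. Setting $v_h = \phi_h^w$ in the momentum error equation and successively taking $\chi_h = A_h\phi_h^w$, $\chi_h = \phi_h^w$, and $\chi_h = (\phi_h^w)_t$ in the filter error equation (together with its time-differentiated version), then combining exactly as in \eqref{eq:timeterm1}-\eqref{eq:gang}, produces the identity
\begin{align*}
\frac{1}{2}\frac{d}{dt}\brak{\norm{\phi_h^w}^2 + \alpha^2\norm{\nabla\phi_h^w}^2} + \nu\brak{\norm{\nabla\phi_h^w}^2 + \alpha^2\norm{A_h\phi_h^w}^2} = \mathcal{R}_{\mathrm{cons}} + \mathcal{R}_{\mathrm{nl}}.
\end{align*}
The consistency residual $\mathcal{R}_{\mathrm{cons}}$ collects contributions from $\eta^w_t$ (paired against $\phi_h^w$ via the $X'$-duality afforded by the hypothesis $w_t \in L^2(0,T;X')$), from the viscous cross-term $\nu\lip{\nabla\eta^u}{\nabla\phi_h^w}$, from the filter-side $\eta^w$ pieces, and from the pressure gap $\lip{P-q_h}{\divergence\phi_h^w}$; each is handled by Cauchy-Schwarz and Young's inequality, with the viscous surplus absorbed into the left side.

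The main obstacle is the nonlinear residual
\begin{align*}
\mathcal{R}_{\mathrm{nl}} = c(w_h,w_h,\phi_h^w) - c(w,w,\phi_h^w) = -c(e^w,w,\phi_h^w) - c(w_h,e^w,\phi_h^w).
\end{align*}
I would further split $e^w = \eta^w - \phi_h^w$ in each occurrence and then apply \eqref{eq:form1}-\eqref{eq:form3} to redistribute derivatives so that they fall on $w$ wherever possible; the pure $\phi_h^w$-in-all-three-slots contributions cancel by the same skew-symmetry argument used in Theorem \ref{Conservation_Thm}. The surviving terms can then be bounded using $L^\infty$-control of $w$ and $\nabla w$, which is precisely why the theorem assumes $w,\nabla w \in L^1(0,T;L^\infty)$, producing contributions of the shape $C(\norm{w}_{L^\infty} + \norm{\nabla w}_{L^\infty})\brak{\norm{\phi_h^w}^2 + \alpha^2\norm{\nabla\phi_h^w}^2}$ plus $\eta^w$-residuals tamed by Young's inequality. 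Integrating in time and invoking Gronwall's inequality then yields the exponential factor $K$; adding back $\eta^w$ by the triangle inequality gives the first stated estimate.

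For the $\norm{e^u}$ bound, I would mimic the derivation of the $\norm{u_h}$ bound \eqref{eq:uheq1}-\eqref{eq:uheq2}. Testing the filter error equation with $\chi_h = \phi_h^u$ expresses $\norm{\phi_h^u}^2$ in terms of the already-controlled quantities $A_h\phi_h^w$, $\nabla\phi_h^w$, $\phi_h^w$, $\eta^w$, $\nabla\eta^w$, and $\eta^u$, analogously to how \eqref{eq:1}-\eqref{eq:2} combine into \eqref{eq:uheq2}. Combining this with $\norm{e^u}^2 \le 2\norm{\eta^u}^2 + 2\norm{\phi_h^u}^2$ and the first estimate delivers the second bound, with the residual $\norm{\eta^u}^2$ term accounting for the approximation-error contribution.
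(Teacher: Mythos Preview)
Your plan is essentially the paper's proof: same error splitting via the Stokes projection, same battery of test functions $\phi_h^w$, $A_h\phi_h^w$, $(\phi_h^w)_t$ in the filter error equation to build the energy identity, then Gronwall and triangle inequality. Two differences deserve comment.

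For the nonlinear residual the paper does not carry out your add-and-subtract decomposition; it simply invokes an external estimate (Theorem~3.2 of \cite{OR20}) which yields
\[
|c(w,w,\phi_h^w)-c(w_h,w_h,\phi_h^w)|\le C\bigl(\|w\|_{L^\infty}+\|\nabla w\|_{L^\infty}\bigr)\bigl(\|\eta^w\|^2+\|\phi_h^w\|^2\bigr),
\]
with only $\|\phi_h^w\|^2$ (not the full filtered energy) on the right. Your hand-derivation would also work, so this is a cosmetic difference.

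Your Step~5, however, has a small gap. Mimicking \eqref{eq:uheq1}--\eqref{eq:uheq2} at the error level, after testing with $\chi_h=\phi_h^u$ and feeding in the analogs of \eqref{eq:1}--\eqref{eq:2}, produces a term $\alpha^4\|A_h\phi_h^w\|^2$. But the first estimate only controls $\nu\int_0^T\alpha^2\|A_h\phi_h^w\|^2\,dt$, not its pointwise-in-time value, so your route would yield only a time-integrated bound on $\|e^u\|$. The paper instead applies Cauchy--Schwarz directly to $\alpha^2(\nabla e^w,\nabla\phi_h^u)$, then uses the \emph{inverse inequality} $\|\nabla\phi_h^u\|\le Ch^{-1}\|\phi_h^u\|$ together with the standing assumption $\alpha=O(h)$ (Remark~3.1) to obtain $\alpha^2\|\nabla e^w\|\|\nabla\phi_h^u\|\le C\alpha\|\nabla e^w\|\|\phi_h^u\|$, whence $\|\phi_h^u\|\lesssim \alpha\|\nabla e^w\|+\|e^w\|+\|\eta^u\|$. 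All three quantities are controlled pointwise by the first estimate. You did not mention the inverse inequality or the $\alpha=O(h)$ hypothesis; both are essential ingredients in closing the $\|e^u\|$ bound as stated.
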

\textcolor{blue}{\begin{Corollary}
Let $(u_h,P_h,w_h,\lambda_h)$ solve \eqref{eq:emacreg1}-\eqref{eq:emacreg4} and $(u,p,w)$ satisfy the same criteria as in Theorem \ref{error_theorem}.  Additionally suppose $u,w \in L^{\infty}(0,T;H^{k+1})$, $w_t \in L^2(0,T;H^{k+1})$, and $P \in L^2(0,T;H^{k})$.  Assuming $X_h=X\cap P_k(\mathcal{T}_k)$ and $Q_h=Q\cap P_{k-1}(\mathcal{T}_k)$, where $P_k$ denotes the set of degree $k$ polynomials, satisfies the infsup condition, we get from \eqref{eq:etabound} and \eqref{eq:etaboundgrad}
\begin{align*}
\begin{split}
&\alpha^2\norm{\nabla e^w}^2+\norm{e^w}^2+\nu\int_0^T\brak{2\alpha^2\norm{A_he^w}^2+\norm{\nabla e^w}^2}dt\\
\leq & C\left(\alpha^2h^{2k}|w|_{k+1}^2+h^{2k+2}|w|_{k+1}^2+\nu\brak{2\alpha^2h^{2k-1}\norm{w}^2_{L^2(0,T;H^{k+1})}+h^{2k}\norm{w}^2_{L^2(0,T;H^{k+1})}}\right. \\ 
&\left.+K\left[ 4\nu\inv\left(h^{2k+2}\norm{w_t}^2_{L^2(0,T;H^{k+1})}+h^{2k}\norm{P}^2_{L^2(0,T;H^k)}\right)\right]\right),
\end{split}
\end{align*}
and 
\begin{align*}
&\norm{e^u}^2\\
\leq & C\left(\alpha^2h^{2k}|w|_{k+1}^2+h^{2k+2}|w|_{k+1}^2+\nu\brak{2\alpha^2h^{2k-1}\norm{w}^2_{L^2(0,T;H^{k+1})}+h^{2k}\norm{w}^2_{L^2(0,T;H^{k+1})}}\right.\\
+&\left.K\left[ 4\nu\inv\left(h^{2k+2}\norm{w_t}^2_{L^2(0,T;H^{k+1})}+h^{2k}\norm{P}^2_{L^2(0,T;H^{k})}\right)\right]+3h^{2k+2}|u|^2_{L^2(0,T;H^{k+1})}\right),
\end{align*}
where $K$ is defined in Theorem \ref{error_theorem}.
\end{Corollary}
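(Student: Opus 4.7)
The plan is to reduce this corollary to a sequence of standard approximation estimates plugged directly into the abstract error bound of Theorem \ref{error_theorem}. Every term on the right-hand side of that theorem is either an $\eta$-error of the form $w - I_h^{St} w$ or $u - I_h^{St} u$, a dual norm of $\eta^w_t$, or a best-approximation quantity for the pressure. The hypotheses $u,w \in L^\infty(0,T;H^{k+1})$, $w_t \in L^2(0,T;H^{k+1})$, $P \in L^2(0,T;H^k)$ are exactly the regularity needed to activate standard $P_k/P_{k-1}$ finite element approximation results.

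First I would apply \eqref{eq:etabound} and \eqref{eq:etaboundgrad} directly to $\eta^w$ and $\eta^u$, producing $\norm{\nabla \eta^w}^2 \leq C h^{2k}|w|_{k+1}^2$, $\norm{\eta^w}^2 \leq C h^{2k+2}|w|_{k+1}^2$, and $\norm{\eta^u}^2 \leq C h^{2k+2}|u|_{k+1}^2$, with the $L^2(0,T;L^2)$ versions obtained by time integration. Next I would treat the discrete Laplacian term $\norm{A_h\eta^w}$: since $A_h\eta^w \in V_h$, the defining identity \eqref{eq:StokesProj} with test function $A_h\eta^w$ itself together with Cauchy--Schwarz and the standard inverse inequality $\norm{\nabla v_h}\leq Ch^{-1}\norm{v_h}$ yields $\norm{A_h\eta^w} \leq C h^{-1}\norm{\nabla \eta^w}$. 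Combining with \eqref{eq:etabound} and integrating in time gives the sub-optimally scaled term $C\alpha^2 h^{2k-1}\norm{w}^2_{L^2(0,T;H^{k+1})}$ appearing in the statement; the loss of one power of $h$ is an intrinsic cost of the inverse estimate used to control $A_h$.

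Third, for the dual norm $\norm{\eta^w_t}_{X'}$, I would note that differentiation in time commutes with the (time-independent) Stokes projection, so $\eta^w_t = w_t - I_h^{St} w_t$. Testing against arbitrary $v \in X$ and applying \eqref{eq:etaboundgrad} to $w_t$ together with the Poincar\'e inequality on $X$ produces $\norm{\eta^w_t}_{X'} \leq C h^{k+1}|w_t|_{k+1}$, and integration in time delivers the $h^{2k+2}\norm{w_t}^2_{L^2(0,T;H^{k+1})}$ contribution. For the pressure term I would choose $q_h$ to be the $L^2$-projection of $P$ onto $Q_h = Q \cap P_{k-1}(\mathcal{T}_h)$, so that the classical approximation bound for piecewise polynomials of degree $k-1$ gives $\inf_{q_h}\norm{P-q_h}^2 \leq C h^{2k}|P|_k^2$, producing the $h^{2k}\norm{P}^2_{L^2(0,T;H^k)}$ term after time integration.

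Finally, substituting all of these bounds into both estimates of Theorem \ref{error_theorem} and absorbing numerical constants into a single $C$ gives the two inequalities of the corollary. The work is essentially mechanical; the only mildly subtle point is the $A_h\eta^w$ term, where the unavoidable $h^{-1}$ from the inverse inequality is the reason this one contribution scales as $h^{2k-1}$ rather than matching the $h^{2k}$ scaling of the remaining terms.
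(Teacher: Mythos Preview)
Your proposal is correct and is exactly the approach the paper takes: its entire proof is the single sentence ``The result follows by applying standard interpolation estimates to Theorem 3.4 with the selected discrete spaces,'' and you have simply made those estimates explicit term by term. One small arithmetic point: your inverse-inequality argument actually yields $\norm{A_h\eta^w}^2 \leq Ch^{-2}\norm{\nabla\eta^w}^2 \leq Ch^{2k-2}|w|_{k+1}^2$, i.e.\ a factor $h^{2k-2}$ rather than the $h^{2k-1}$ written in the corollary; the stated exponent in the paper appears to be a typo, and your method is the natural one.
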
}
\begin{proof}
\textcolor{blue}{The result follows by applying standard interpolation estimates to Theorem 3.4 with the selected discrete spaces.}
\end{proof}

\begin{remark}
\textcolor{blue}{Theorem \ref{error_theorem} provides an optimal $L^2(0,T;H^1)$ error estimate for $w$} if \textcolor{blue}{$\alpha=O(h)$ (which we assume is the case).  An} additional feature is that the Gronwall constant does not depend explicitly on the inverse of the viscosity (which is not known to be true for discretizations of related models such as NS-$\alpha$ or Leray-$\alpha$ \cite{LR12}).
\end{remark}

\begin{proof}
\textcolor{blue}{We will break up the proof for Theorem \ref{error_theorem} into steps.}
\\
\\
\textcolor{blue}{\textbf{Step 1:}  Develop error equation.}
\\
\\
We begin by testing \eqref{reg1} by $v_h\in V_h$ and \eqref{reg2} by $\chi_h\in V_h$ and subtracting them from \eqref{eq:emacreg1}-\eqref{eq:emacreg3} to get
\begin{align}
\lip{u_t-(u_h)_t}{v_h}+c(w,w,v_h)-c(w_h,w_h,v_h)-\lip{P-q_h}{\divergence v_h}+\nu\lip{\nabla e^u}{\nabla v_h}=0&,\label{eq:diff1}\\
\alpha^2\lip{\nabla e^w}{\nabla \chi_h}+\lip{e^w}{\chi_h}=\lip{e^u}{\chi_h}&,\label{eq:diff2}
\end{align}
for any $v_h,\chi_h \in V_h$.  \textcolor{blue}{Note that  $\lip{-\lambda_h}{\divergence \chi_h}=0$ since $\chi_h \in V_h$.  We also obtain $\lip{P-q_h}{\divergence v_h}$ from
\begin{align*}
\lip{P-P_h}{\divergence v_h}=\lip{P-P_h}{\divergence v_h}+\lip{P_h-q_h}{\divergence v_h}=\lip{P-q_h}{\divergence v_h},
\end{align*}
which holds for any $q_h \in Q_h$, since $v_h \in V_h$.}

Now let $e^u=\phi_h^u+\eta^u$ and $e^w=\phi_h^w+\eta^w$ where $\eta^u=u-I_h^{St}(u)$, $\phi_h^u=I_h^{St}(u)-u_h$, $\eta^w=w-I_h^{St}(u)$, $\phi_h^w=I_h^{St}(w)-w_h$, and set $v_h=\phi_h^w$ and $\chi_h=\phi_h^w$.  Then after some manipulation \eqref{eq:diff1} becomes
\begin{align}
\begin{split}
\lip{(\phi_h^u)_t}{\phi_h^w}+\nu\lip{\nabla \phi_h^u}{\nabla \phi_h^w}=&-\lip{\eta^u_t}{\phi_h^w}-(c(w,w,\phi_h^w)\\
&-c(w_h,w_h,\phi_h^w))+\lip{P-q_h}{\divergence \phi_h^w}.\label{eq:diff1a}
\end{split}
\end{align}
\\
\textcolor{blue}{\textbf{Step 2:}  Finding suitable expressions for $\lip{\nabla \phi_h^u}{\nabla \phi_h^w}$ and  $\lip{(\phi_h^u)_t}{\phi_h^w}$ in equation \eqref{eq:diff1a}.}
\\
\\
\textcolor{blue}{First, we use the orthogonality of the Stokes operator \eqref{eq:StokesOp} to get}
\begin{align}
\begin{split}
\textcolor{blue}{\lip{\nabla \phi_h^w}{\nabla\eta^w}=\lip{\nabla \phi_h^w}{\nabla (w-I_h^{St}w)}=0,} \label{eq:grad_ortho}
\end{split}
\end{align}
\textcolor{blue}{since $\phi_h^w \in V_h$.  Next we test \eqref{eq:diff2} with $\chi_h=A_h\phi_h^w$ and use \eqref{eq:StokesProj} to get}
\begin{align*}
&\alpha^2\lip{\nabla e^w}{\nabla A_h\phi_h^w}+\lip{e^w}{A_h\phi_h^w}=\lip{e^u}{A_h\phi_h^w}.\notag
\end{align*}
We use $e^w=\phi_h^w+\eta^w$ to separate each term, \textcolor{blue}{then apply \eqref{eq:grad_ortho} to get}
\begin{align}
\lip{\nabla \phi_h^u}{\nabla \phi_h^w}=\alpha^2\norm{A_h\phi_h^w}^2+\norm{\nabla \phi_h^w}^2.\label{eq:new2}
\end{align}
\textcolor{blue}{Now we must handle $\lip{(\phi_h^u)_t}{\phi_h^w}$.}  We test \eqref{eq:diff2} with $\chi_h=\phi_h^w$ and apply \eqref{eq:StokesProj} to get
\begin{align}
\begin{split}
\lip{\phi_h^u}{\phi_h^w}+\lip{\eta^u}{\phi_h^w}&=\alpha^2\norm{\nabla\phi_h^w}^2+\norm{\phi_h^w}^2+\lip{\eta^w}{\phi_h^w}. \label{eq:poop}
\end{split}
\end{align}
Next, take the derivative with respect to $t$ of \eqref{eq:poop} and simplify,
\begin{align}
\lip{(\phi_h^u)_t}{\phi_h^w}&=\frac{d}{dt}[\alpha^2\norm{\nabla\phi_h^w}^2+\norm{\phi_h^w}^2+\lip{\eta^w}{\phi_h^w}-\lip{\eta^u}{\phi_h^w}]-\lip{\phi_h^u}{(\phi_h^w)_t}\notag\\
&=\frac{d}{dt}\brak{\alpha^2\norm{\nabla \phi_h^w}^2+\norm{\phi_h^w}^2}+\lip{\eta^w_t}{\phi_h^w}+\lip{\eta^w}{(\phi_h^w)_t}\notag\\
& \qquad -\lip{\eta^u_t}{\phi_h^w}-\lip{\eta^u}{(\phi_h^w)_t}-\lip{\phi_h^u}{(\phi_h^w)_t}. \label{eq:bigtimeterm1}
\end{align}
\\
\textcolor{blue}{\textbf{Step 3:} Finding a suitable expression for $\lip{\phi_h^u}{(\phi_h^w)_t}$ from \eqref{eq:bigtimeterm1}.}
\\
\\
\textcolor{blue}{Similar to what we did in \eqref{eq:grad_ortho}, we will do the same for $\lip{(\phi_h^w)_t}{\eta^w}$ and use \eqref{eq:StokesOp} to get}
\begin{align}
\begin{split}
\textcolor{blue}{\lip{\nabla(\phi_h^w)_t}{\nabla \eta^w}=\lip{\nabla(\phi_h^w)_t}{\nabla(w_t-I_h^{St}w)}=0}.\label{eq:grad_ortho_time}
\end{split}
\end{align}
\textcolor{blue}{Now we test \eqref{eq:diff2} with $\chi_h=(\phi_h^w)_t$ and use \eqref{eq:grad_ortho_time} to get}
\begin{align}
\begin{split}
\lip{\phi_h^u}{(\phi_h^w)_t}&=\frac{\alpha^2}{2}\frac{d}{dt}\norm{\nabla \phi_h^w}^2+\frac{1}{2}\frac{d}{dt}\norm{\phi_h^w}^2+\lip{\eta^w}{(\phi_h^w)_t}-\lip{\eta^u}{(\phi_h^w)_t}. \label{eq:bigtimeterm2}
\end{split}
\end{align}
Substituting \eqref{eq:bigtimeterm2} into \eqref{eq:bigtimeterm1} and rearranging yields
\begin{align}
\begin{split}
\lip{(\phi_h^u)_t}{\phi_h^w}&=\frac{1}{2}\frac{d}{dt}\brak{\alpha^2\norm{\nabla \phi_h^w}^2+\norm{\phi_h^w}^2}+\lip{\eta^w_t}{\phi_h^w}-\lip{\eta^u_t}{\phi_h^w}. \label{eq:new1}
\end{split}
\end{align}
Now combine \eqref{eq:new1} and \eqref{eq:new2} to get the identity
\begin{align}
\begin{split}
&\lip{(\phi_h^u)_t}{\phi_h^w}+\nu\lip{\nabla \phi_h^u}{\nabla \phi_h^w}\\
=&\frac{1}{2}\frac{d}{dt}\brak{\alpha^2\norm{\nabla \phi_h^w}^2+\norm{\phi_h^w}^2}+\lip{\eta^w_t}{\phi_h^w}-\lip{\eta^u_t}{\phi_h^w}+\nu\brak{\alpha^2\norm{A_h\phi_h^w}^2+\norm{\nabla \phi_h^w}^2}. \label{eq:bigassidentity}
\end{split}
\end{align}
Rearranging \eqref{eq:bigassidentity} now provides
\begin{align}
\begin{split}
&\frac{1}{2}\frac{d}{dt}\brak{\alpha^2\norm{\nabla \phi_h^w}^2+\norm{\phi_h^w}^2}+\nu\brak{\alpha^2\norm{A_h\phi_h^w}^2+\norm{\nabla \phi_h^w}^2}\\
=&\lip{(\phi_h^u)_t}{\phi_h^w}+\nu\lip{\nabla \phi_h^u}{\nabla \phi_h^w}-\lip{\eta^w_t}{\phi_h^w}+\lip{\eta^u_t}{\phi_h^w}. \label{eq:bigguy}
\end{split}
\end{align}
Putting together \eqref{eq:diff1a} and \eqref{eq:bigguy} and canceling the $\lip{\eta^u_t}{\phi_h^w}$ term yields
\begin{align}
\begin{split}
&\frac{1}{2}\frac{d}{dt}\brak{\alpha^2\norm{\nabla \phi_h^w}^2+\norm{\phi_h^w}^2}+\nu\brak{\alpha^2\norm{A_h\phi_h^w}^2+\norm{\nabla \phi_h^w}^2}\\
=&-(c(w,w,\phi_h^w)-c(w_h,w_h,\phi_h^w))+\lip{P-q_h}{\divergence \phi_h^w}-\lip{\eta^w_t}{\phi_h^w}. \label{eq:lastequality}
\end{split}
\end{align}
\\
\textcolor{blue}{\textbf{Step 4:} Estimates for each term in equation \eqref{eq:lastequality}.}
\\
\\
Estimates for the right hand side of \eqref{eq:lastequality} where we employ the Cauchy-Schwarz inequality and Young's inequality are as follows:
\begin{align}
&|\lip{\eta^w_t}{\phi_h^w}|\leq 2\nu\inv\norm{\eta^w_t}^2_{X'}+\frac{\nu}{2}\norm{\phi_h^w}^2,\label{eq:finalbounds1}\\
&|\lip{P-q_h}{\divergence \phi_h^w}|\leq 2\nu\inv \norm{P-q_h}^2+\frac{\nu}{2}\norm{\nabla \phi_h^w}^2.\label{eq:finalbounds2}
\end{align}
From Theorem 3.2 in \cite{OR20}, we have the following bound for the nonlinear terms,
\begin{align*}
\begin{split}
&|c(w,w,\phi_h^w)-c(w_h,w_h,\phi_h^w)|\\
& \hspace{20mm} \leq C\pare{\norm{\eta^w}^2\linfnorm{w}+\linfnorm{\nabla w}\norm{\eta^w}^2+\brak{\linfnorm{w}+\linfnorm{\nabla w}}\norm{\phi_h^w}^2}. 
\end{split}
\end{align*}
Applying \eqref{eq:finalbounds1}-\eqref{eq:finalbounds2} to \eqref{eq:lastequality} gives us
\begin{align*}
\begin{split}
&\frac{1}{2}\frac{d}{dt}\brak{\alpha^2\norm{\nabla \phi_h^w}^2+\norm{\phi_h^w}^2}+\nu\brak{\alpha^2\norm{A_h\phi_h^w}^2+\norm{\nabla \phi_h^w}^2}\\
\leq &C\pare{\norm{\eta^w}^2\linfnorm{w}+\linfnorm{\nabla w}\norm{\eta^w}^2+\brak{\linfnorm{w}+\linfnorm{\nabla w}}\norm{\phi_h^w}^2}\\
&+2\nu\inv\pare{\norm{\eta^w}^2_{X'}+\norm{P-q_n}^2}+\frac{\nu}{2}\norm{\phi_h^w}^2+\frac{\nu}{2}\norm{\nabla \phi_h^w}^2,
\end{split}
\end{align*}
and then by rearranging we get
\begin{align*}
\begin{split}
&\frac{d}{dt}\brak{\alpha^2\norm{\nabla \phi_h^w}^2+\norm{\phi_h^w}^2}+\nu\brak{2\alpha^2\norm{A_h\phi_h^w}^2+\norm{\nabla \phi_h^w}^2}\\
\leq & 2C\pare{\norm{\eta^w}^2\linfnorm{w}+\linfnorm{\nabla w}\norm{\eta^w}^2}+2\pare{C\brak{\linfnorm{w}+\linfnorm{\nabla w}}+\nu}\norm{\phi_h^w}^2\\
&+4\nu\inv\pare{\norm{\eta_t^w}^2_{X'}+\norm{P-q_n}^2}.
\end{split}
\end{align*}
Finally apply Gronwall's inequality and get
\begin{align*}
\begin{split}
&\alpha^2\norm{\nabla \phi_h^w}^2+\norm{\phi_h^w}^2+\nu\int_0^T\brak{2\alpha^2\norm{A_h\phi_h^w}^2+\norm{\nabla \phi_h^w}^2}dt\\
\leq& K\left[ 4\nu\inv\left(\norm{\eta^w_t}^2_{L^2(0,T;X')} +\inf_{q_h \in L^2(0,T;Q_h)}\norm{P-q_h}^2_{L^2(0,T;L^2)}\right) \right],
\end{split}
\end{align*}
where
\begin{align*}
K=\exp\brak{2C\pare{\norm{w}_{L^1(0,T;L^{\infty})}+\norm{\nabla w}_{L^1(0,T;L^{\infty})}}+T\nu}.
\end{align*}
\textcolor{blue}{By the triangle inequality, we get the desired error term}
\begin{align}
\begin{split}
&\alpha^2\norm{\nabla e^w}^2+\norm{e^w}^2+\nu\int_0^T\brak{2\alpha^2\norm{A_he^w}^2+\norm{\nabla e^w}^2}dt\\
\leq & \alpha^2\norm{\nabla \eta^w}^2+\norm{\eta^w}^2+\nu\brak{2\alpha^2\norm{A_h\eta^w}^2_{L^2(0,T;L^2)}+\norm{\nabla \eta^w}^2_{L^2(0,T;L^2)}}\\
&+K\left[ 4\nu\inv\left(\norm{\eta^w_t}^2_{L^2(0,T;X')}+\inf_{q_h \in L^2(0,T;Q_h)}\norm{P-q_h}^2_{L^2(0,T;L^2)}\right) \right]. \label{eq:3.4bound}
\end{split}
\end{align}
\\
\textcolor{blue}{\textbf{Step 5:} Use the bound on $e^w$ to find a bound for $e^u$.}
\\
\\
\textcolor{blue}{
Set $\chi_h=\phi_h^u$ in \eqref{eq:emacreg3} and expand $e^u$ to get
\begin{align*}
\norm{\phi_h^u}^2=\alpha^2\lip{\nabla e^w}{\nabla \phi_h^u}+\lip{e^w}{\phi_h^u}-\lip{\eta^u}{\phi_h^u}.
\end{align*}
Applying the Cauchy-Schwarz inequality gives
\begin{align*}
\norm{\phi_h^u}^2\leq \alpha^2\norm{\nabla e^w}\norm{\nabla \phi_h^u}+\norm{e^w}\norm{\phi_h^u}+\norm{\eta^u}\norm{\phi_h^u}.
\end{align*}
Now use the inverse inequality on the first term of the left hand side,
\begin{align*}
\norm{\phi_h^u}^2\leq C\alpha^2\frac{1}{h}\norm{\nabla e^w}\norm{\phi_h^u}+\norm{e^w}\norm{\phi_h^u}+\norm{\eta^u}\norm{\phi_h^u}.
\end{align*}
Recall from Remark 3.1 that $\alpha=O(h)$, and then we reduce to get
\begin{align*}
\norm{\phi_h^u}\leq C\pare{\alpha \norm{\nabla e^w}+\norm{e^w}}+\norm{\eta^w}.
\end{align*}
Squaring both sides and using Young's inequality on the right hand side gives us
\begin{align*}
\norm{\phi_h^u}^2\leq 2C\pare{\alpha^2\norm{\nabla e^w}^2+\norm{e^w}^2}+2\norm{\eta^u}^2.
\end{align*}
For our error bound, we get
\begin{align*}
\norm{e^u}\leq 2C\pare{\alpha^2\norm{\nabla e^w}^2+\norm{e^w}^2}+2\norm{\eta^u}^2.
\end{align*}
Now applying \eqref{eq:3.4bound} and \eqref{eq:etabound} gives us the stated result.
}
\end{proof}

\section{Numerical experiments}

In this section, we provide results for several numerical experiments that test EMAC-Reg and compare it to NSE schemes and other related models.  Define $\alpha$ to be the filtering radius of the Helmholtz filter, typically chosen as $\alpha=O(h)$.   The schemes that we utilize in addition to EMAC-Reg are as follows. Find $(u_h,p_h) \in (X_h,Q_h)$ such that for every $(v_h,q_h)\in (X_h,Q_h)$:

\vspace{3mm}

NSE Skew-symmetric (SKEW)\\ 
Find $(u_h,p_h) \in (X_h,Q_h)$ such that for every $(v_h,q_h)\in (X_h,Q_h)$:
\begin{align*}
\lip{(u_h)_t}{v_h}+\lip{u_h \cdot \nabla u_h}{v_h}+\frac{1}{2}\lip{(\divergence u_h)u_h}{v_h}-\lip{p_h}{\divergence v_h}+\nu\lip{\nabla u_h}{\nabla v_h}&=\lip{f}{v_h},\\
\lip{\divergence u_h}{q_h}&=0.
\end{align*}

NSE EMAC (EMAC)\\ 
Find $(u_h,p_h) \in (X_h,Q_h)$ such that for every $(v_h,q_h)\in (X_h,Q_h)$:
\begin{align*}
\lip{(u_h)_t}{v_h}+2\lip{D(u_h)u_h}{v_h}+\lip{(\divergence u_h)u_h}{v_h}-\lip{p_h}{\divergence v_h}+\nu\lip{\nabla u_h}{\nabla v_h}&=\lip{f}{v_h},\\
\lip{\divergence u_h}{q_h}&=0.
\end{align*}

NS-$\alpha$\\
Find $(u_h,p_h,w_h,\lambda_h) \in (X_h,Q_h,X_h,Q_h)$ such that for every\\ $(v_h,q_h,\chi_h,r_h)\in (X_h,Q_h,X_h,Q_h)$:
\begin{align*}
\lip{(u_h)_t}{v_h}+\lip{u_h \times w_h}{v_h}-\lip{p_h}{\nabla \cdot v_h}+\nu\lip{\nabla u_h}{\nabla v_h}&=\lip{f}{v_h},\\
\lip{\nabla \cdot u_h}{q_h}&=0,\\
\lip{\lambda_h}{\nabla \cdot \chi_h}+\alpha^2\lip{\nabla w_h}{\nabla \chi_h}+\lip{w_h}{\chi_h}&=\lip{u_h}{\chi_h},\\
\lip{\nabla \cdot w_h}{r_h}&=0.
\end{align*}

\subsection{Convergence rate test for a problem with analytical solution}

We now test the convergence results described by Theorem \ref{error_theorem} on a problem that is analogous to the Chorin problem in \cite{Cho68}, however it is slightly adjusted for the EMAC-Reg model.  We deduce that for $\alpha \geq 0$, the filtered velocity, velocity and pressure defined by
\begin{align*}
w &= \bmat{-\cos(\pi x)\sin(\pi y)\\ \sin(\pi x)\cos(\pi y)}e^{-2\pi^2\nu t},\\
u &= (1+2\pi^2\alpha^2)w,\\
p &= -w \cdot \nabla w,
\end{align*}
is a solution for \eqref{reg1}-\eqref{reg2} with $f=0$ and initial conditions $w_0=w(0), \: u_0=u(0)$.  We consider the domain $\Omega=(0,1)^2$ and enforce the appropriate Dirichlet boundary conditions for $u$ and $w$.

\textcolor{blue}{We will be conducting three experiments: a spatial convergence test with $\Delta t$ fixed, a temporal convergence test with $h$ fixed, and then a hybrid of the two.  For each test, we take $\alpha=\frac{h}{2}$, $\nu=.2$ and use $(P_2,P_1)$ elements on a uniform mesh.  For the spatial convergence test, we use $\Delta t=.005$ with Crank-Nicolson time stepping and fix our end time at $T=1$.  We calculate the error with $h=\pare{\frac{1}{2}}^i$ where $i=1,\dots,7$.}

\textcolor{blue}{For the temporal convergence test, we have nearly identical parameters as the spatial convergence test.  We set $h=\frac{1}{128}$ and $\Delta t=2^{-i}$ for $i=0,\dots,6$.}

\textcolor{blue}{For the hybrid experiment, we will use incrementing values for both $h$ and $\Delta t$.  We will start with $h=\frac{1}{2}$ and $\Delta t=1$, then $h=\frac{1}{4}$ and $\Delta t=\frac{1}{2}$, and so on.  For each of these experiments, we will be calculating $L^{\infty}(0,T;L^2)$ and $L^{\infty}(0,T;H^1)$}

\textcolor{blue}{We expect second order convergence for the $H^1$ norm, and an $L^2$ lift is expected to make the $L^2$ convergence third order for both tests.  The results are displayed in Tables \ref{fig:convergence}, \ref{fig:temporal_convergence}, and \ref{fig:temporal_convergence} for spatial, temporal, and the combined convergence respectively.}  We observe third order accuracy in the $L^2$ norm and second order accuracy in the $H^1$ norm \textcolor{blue}{for spatial convergence, as expected.  For temporal convergence, we observe second order convergence at the least for $L^2$ and mixed convergence rates for $H^1$.  We get high convergence rates starting around $\Delta t=\frac{1}{4}$ through $\Delta t=\frac{1}{32}$, but both maintain at least second order convergence.  We get similar results for the hybrid experiment.}

\begin{table}[H]
\begin{tabular}{|c||c|c||c|c||c|c||}
\hline
\textcolor{blue}{h} & 
\textcolor{blue}{$\norm{w-w_h}_{\infty,0}$} & 
\textcolor{blue}{Rate} & 
\textcolor{blue}{$\norm{w-w_h}_{\infty,1}$} & 
\textcolor{blue}{Rate} & 
\textcolor{blue}{$\norm{u-u_h}_{\infty,0}$} & 
\textcolor{blue}{Rate} \\
\hline
\textcolor{blue}{1/2} & 
\textcolor{blue}{8.98240e-04} & 
\textcolor{blue}{-} & 
\textcolor{blue}{1.33377e-02} & 
\textcolor{blue}{-} & 
\textcolor{blue}{8.98240e-04} & 
\textcolor{blue}{-} \\
\hline
\textcolor{blue}{1/4} & 
\textcolor{blue}{1.07331e-04} & 
\textcolor{blue}{3.06503} & 
\textcolor{blue}{3.55447e-03} & 
\textcolor{blue}{1.90780} & 
\textcolor{blue}{1.07331e-03} & 
\textcolor{blue}{3.06503}\\
\hline
\textcolor{blue}{1/8} & 
\textcolor{blue}{1.30963e-05} & 
\textcolor{blue}{3.03484} & 
\textcolor{blue}{9.11945e-04} & 
\textcolor{blue}{1.96262} & 
\textcolor{blue}{1.30963e-05} & 
\textcolor{blue}{3.03484}\\
\hline
\textcolor{blue}{1/16} & 
\textcolor{blue}{1.62923e-06} & 
\textcolor{blue}{3.00689} & 
\textcolor{blue}{2.29787e-04} & 
\textcolor{blue}{1.98865} & 
\textcolor{blue}{1.62923e-06} & 
\textcolor{blue}{3.00689}\\
\hline
\textcolor{blue}{1/32} & 
\textcolor{blue}{2.04701e-07} & 
\textcolor{blue}{2.99260} & 
\textcolor{blue}{5.75694e-05} & 
\textcolor{blue}{1.99692} & 
\textcolor{blue}{2.04701e-07} & 
\textcolor{blue}{2.99260}\\
\hline
\textcolor{blue}{1/64} & 
\textcolor{blue}{3.32277e-08} & 
\textcolor{blue}{2.62306} & 
\textcolor{blue}{1.47046e-05} & 
\textcolor{blue}{1.96904} & 
\textcolor{blue}{3.32277e-08} & 
\textcolor{blue}{2.62306}\\
\hline
\textcolor{blue}{1/128} & 
\textcolor{blue}{2.10578e-08} & 
\textcolor{blue}{0.65803} & 
\textcolor{blue}{4.66998e-06} & 
\textcolor{blue}{1.65478} & 
\textcolor{blue}{2.10578e-08} & 
\textcolor{blue}{0.65803}\\
\hline
\end{tabular}
\caption{\textcolor{blue}{Spatial convergence results for both $u$ and $w$ for EMAC-Reg}}
\label{fig:convergence}
\end{table}

\begin{table}[H]
\begin{tabular}{|c||c|c||c|c||c|c||}
\hline
\textcolor{blue}{$\Delta t$} 
& \textcolor{blue}{$\norm{w-w_h}_{\infty,0}$} 
& \textcolor{blue}{Rate} 
& \textcolor{blue}{$\norm{w-w_h}_{\infty,1}$} 
& \textcolor{blue}{Rate} 
& \textcolor{blue}{$\norm{u-u_h}_{\infty,0}$} 
& \textcolor{blue}{Rate} \\
\hline
\textcolor{blue}{1} 
& \textcolor{blue}{1.36516e-02}
& \textcolor{blue}{-} 
& \textcolor{blue}{1.70370e-01} 
& \textcolor{blue}{-} 
& \textcolor{blue}{1.36516e-02} 
& \textcolor{blue}{-} \\
\hline
\textcolor{blue}{1/2} 
& \textcolor{blue}{3.99355e-03} 
& \textcolor{blue}{1.77333} 
& \textcolor{blue}{5.28610e-01} 
& \textcolor{blue}{1.68840} 
& \textcolor{blue}{3.99355e-03} 
& \textcolor{blue}{1.77333}\\
\hline
\textcolor{blue}{1/4} 
& \textcolor{blue}{2.94349e-04} 
& \textcolor{blue}{3.76207} 
& \textcolor{blue}{6.19844e-03} 
& \textcolor{blue}{3.09223}
& \textcolor{blue}{2.94349e-04} 
& \textcolor{blue}{3.76207}\\
\hline
\textcolor{blue}{1/8} 
& \textcolor{blue}{1.78485e-05} 
& \textcolor{blue}{4.04366} 
& \textcolor{blue}{5.79085e-04} 
& \textcolor{blue}{3.42006} 
& \textcolor{blue}{1.78485e-05} 
& \textcolor{blue}{4.04366}\\
\hline
\textcolor{blue}{1/16} 
& \textcolor{blue}{3.24554e-06} 
& \textcolor{blue}{2.45927} 
& \textcolor{blue}{6.32790e-05} 
& \textcolor{blue}{3.19398} 
& \textcolor{blue}{3.24554e-06} 
& \textcolor{blue}{2.45927}\\
\hline
\textcolor{blue}{1/32} 
& \textcolor{blue}{8.03788e-07} 
& \textcolor{blue}{2.01357} 
& \textcolor{blue}{1.29501e-05} 
& \textcolor{blue}{2.28876} 
& \textcolor{blue}{8.03788e-07} 
& \textcolor{blue}{2.01357}\\
\hline
\textcolor{blue}{1/64} 
& \textcolor{blue}{2.01230e-07} 
& \textcolor{blue}{1.99797} 
& \textcolor{blue}{7.41362e-06} 
& \textcolor{blue}{0.80472} 
& \textcolor{blue}{2.01230e-07} 
& \textcolor{blue}{1.99797}\\
\hline
\end{tabular}
\caption{\textcolor{blue}{Temporal convergence results for both $u$ and $w$ for EMAC-Reg}}
\label{fig:temporal_convergence}
\end{table}

\begin{table}[H]
\begin{tabular}{|c|c||c|c||c|c||c|c||}
\hline
\textcolor{blue}{h} 
& \textcolor{blue}{$\Delta t$} 
& \textcolor{blue}{$\norm{w-w_h}_{\infty,0}$} 
& \textcolor{blue}{Rate} 
& \textcolor{blue}{$\norm{w-w_h}_{\infty,1}$} 
& \textcolor{blue}{Rate} 
& \textcolor{blue}{$\norm{u-u_h}_{\infty,0}$} 
& \textcolor{blue}{Rate} \\
\hline
\textcolor{blue}{1/2} 
& \textcolor{blue}{1} 
& \textcolor{blue}{9.08829e-03} 
& \textcolor{blue}{-} 
& \textcolor{blue}{1.27869e-01} 
& \textcolor{blue}{-} 
& \textcolor{blue}{9.08829e-03} 
& \textcolor{blue}{-} \\
\hline
\textcolor{blue}{1/4} 
& \textcolor{blue}{1/2} 
& \textcolor{blue}{6.37691e-03} 
& \textcolor{blue}{0.51115} 
& \textcolor{blue}{0.11360e-00} 
& \textcolor{blue}{0.17070} 
& \textcolor{blue}{6.37691e-03} 
& \textcolor{blue}{0.51115} \\
\hline
\textcolor{blue}{1/8} 
& \textcolor{blue}{1/4} 
& \textcolor{blue}{4.71124e-04} 
& \textcolor{blue}{3.75868} 
& \textcolor{blue}{2.00927e-02} 
& \textcolor{blue}{2.49922} 
& \textcolor{blue}{4.71124e-04} 
& \textcolor{blue}{3.75868}  \\
\hline
\textcolor{blue}{1/16} 
& \textcolor{blue}{1/8} 
& \textcolor{blue}{2.99146e-05} 
& \textcolor{blue}{3.97719} 
& \textcolor{blue}{2.89569e-03} 
& \textcolor{blue}{2.79469} 
& \textcolor{blue}{2.99146e-05} 
& \textcolor{blue}{3.97719}  \\
\hline
\textcolor{blue}{1/32} 
& \textcolor{blue}{1/16} 
& \textcolor{blue}{3.59222e-06} 
& \textcolor{blue}{3.05790} 
& \textcolor{blue}{3.83564e-04} 
& \textcolor{blue}{2.91637} 
& \textcolor{blue}{3.59222e-06} 
& \textcolor{blue}{3.05790}  \\
\hline
\textcolor{blue}{1/64} 
& \textcolor{blue}{1/32} 
& \textcolor{blue}{8.09962e-07} 
& \textcolor{blue}{2.14895} 
& \textcolor{blue}{5.08696e-05} 
& \textcolor{blue}{2.91459} 
& \textcolor{blue}{8.09962e-07} 
& \textcolor{blue}{2.14895}  \\
\hline
\textcolor{blue}{1/128} 
& \textcolor{blue}{1/64} 
& \textcolor{blue}{2.01230e-07} 
& \textcolor{blue}{2.00901} 
& \textcolor{blue}{7.41362e-06} 
& \textcolor{blue}{2.77855} 
& \textcolor{blue}{2.01230e-07} 
& \textcolor{blue}{2.00901}  \\
\hline
\end{tabular}
\caption{\textcolor{blue}{Convergence results for decreasing values of $h$ and $\Delta t$ for $u$ and $w$ for EMAC-Reg}}
\label{fig:hybrid_convergence}
\end{table}

\subsection{Gresho problem}

Our next experiment is the Gresho Problem, which is also referred to as the standing vortex problem.  We start with an initial condition $u_0$ which is a solution of the steady Euler equations.  Define $r=\sqrt{x^2+y^2}$, and on $\Omega=(-0.5,0.5)^2$, the velocity and pressure solutions are defined by
\begin{align*}
&r\leq .2:\begin{cases}
&u=\bmat{-5y\\5x}\\
&p=12.5r^2+C_1
\end{cases}\\
&.2\leq r\leq .4: \begin{cases}
&u=\bmat{\frac{2y}{r}+5y\\\frac{2x}{r}-5x}\\
&p=12.5r^2-20r+4\log(r)+C_2
\end{cases}\\
&r>.4:\begin{cases}
&u=\bmat{0\\0}\\
&p=0
\end{cases}
\end{align*}
where
\begin{align*}
&C_2=(-12.5)(.4)^2+20(.4)^2-4\log(.4),\\
&C_1=C_2-20(.2)+4\log(.2).
\end{align*}
We compute solutions using EMAC-Reg, EMAC, NS-$\alpha$, and SKEW with Crank-Nicolson time stepping and Newton iterations to solve the nonlinear problem, with $f=0$ and no slip boundary conditions up to $T=4.0$, \textcolor{blue}{and we take $\nu=0$ (we also ran using $\nu=10^{-7}$ and obtained the same results)}.  We computed using $(P_2,P_1)$ Taylor-Hood elements on a $48 \times 48$ uniform mesh, with a time step of $\Delta t=0.01$.  A spatial radius of $\alpha=\frac{1}{50}$ was used for EMAC-Reg and NS-$\alpha$.  Other values of $\alpha$ that were $O(h)$ were tested and gave similar results.

\begin{figure}[H]
\centering
\includegraphics[scale=.2]{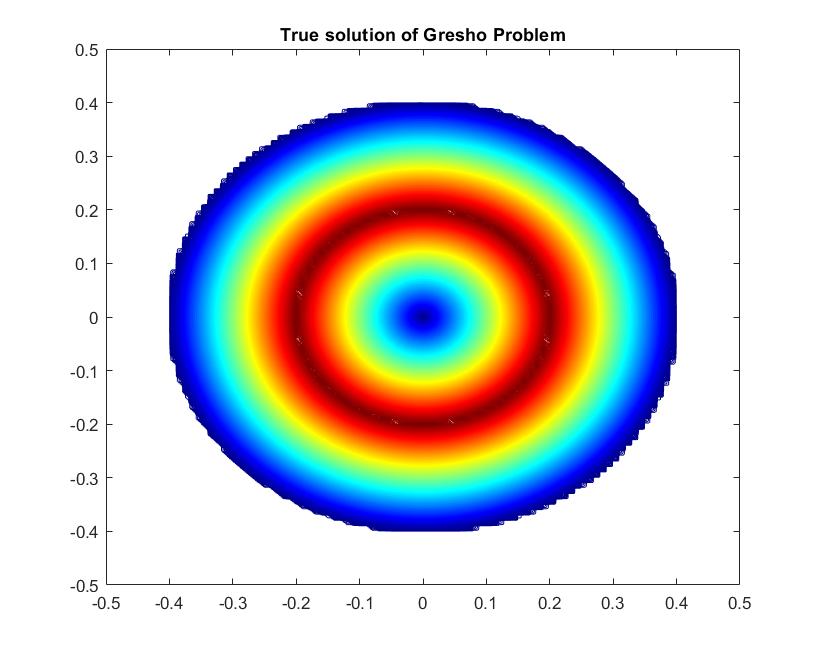}
\caption{Speed contours of the true solution of the Gresho problem at all times.}
\label{fig:TrueGresho}
\end{figure}

\begin{figure}[H]
\centering
\includegraphics[scale=.3]{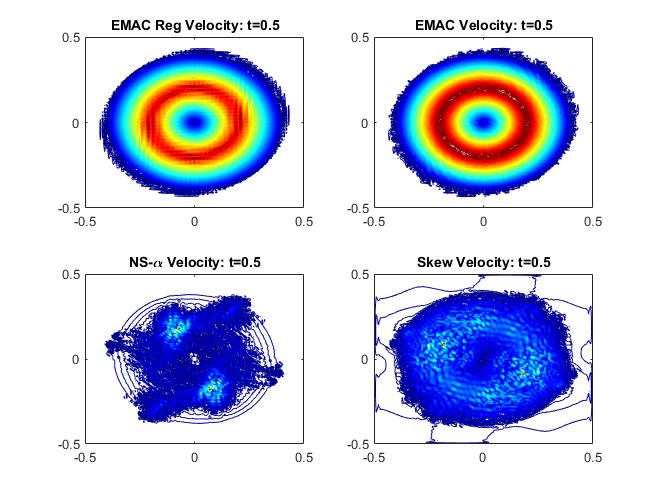}
\includegraphics[scale=.3]{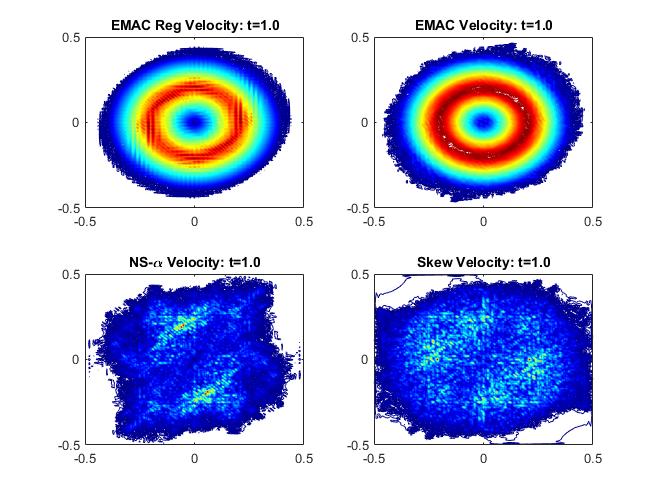}
\includegraphics[scale=.3]{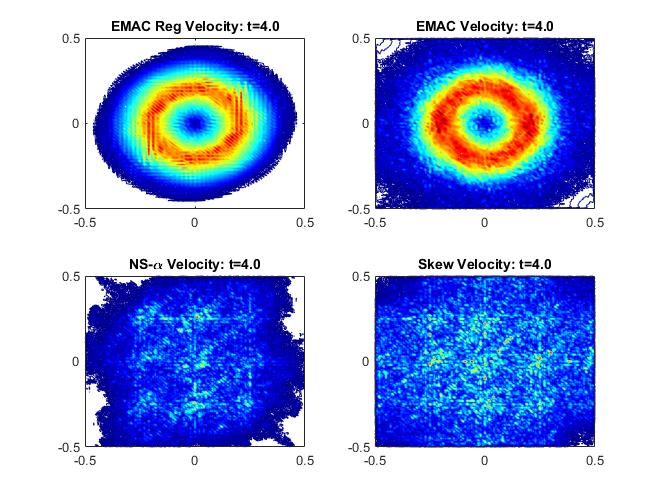}
\caption{Speed contours of our schemes at times $t=.5$, $1.0$, and $4.0$.}
\label{fig:Gresho}
\end{figure}

The Gresho problem solution is constant in time and so as time goes forward a good numerical solution will look similar to figure \ref{fig:TrueGresho}, which is the initial condition plot of speed.

At time $t=.5$, we observe from figure \ref{fig:Gresho} that EMAC-Reg and EMAC perform about equally well, whereas NS-$\alpha$ and SKEW are nearly unrecognizable compared to the true solution in figure \ref{fig:TrueGresho}.  At time $t=1$, we start to see EMAC-Reg outperform EMAC: EMAC-Reg approximately maintains the vortex shape, whereas the vortex for EMAC is spreading out with rough edges.  At time $t=4$, we see both vortices are spreading out, but it is clear that EMAC-Reg resembles figure \ref{fig:TrueGresho} much better.

We also compare $L^2$ error and predictions of important physical quantities that we hope to be preserved with each of these formulations: energy, momentum, and angular momentum versus time.  We have shown in Theorem \ref{Conservation_Thm} that EMAC-Reg preserves energy, momentum, and angular momentum and it can be shown that SKEW and NS-$\alpha$ preserve energy \cite{Layton08, LR12}. 

\begin{figure}[H]
\center
\includegraphics[scale=.25]{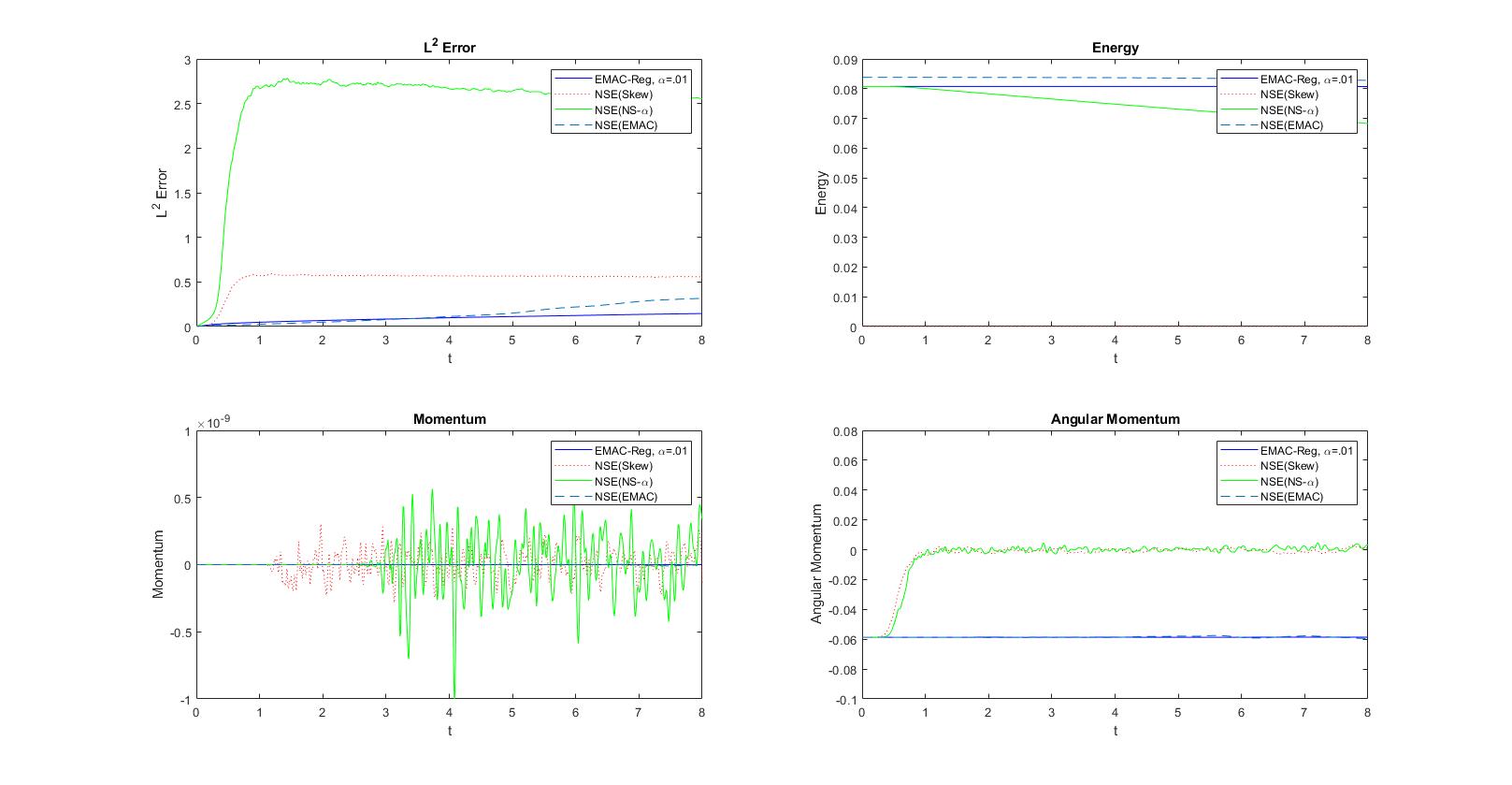}
\caption{Plots of time versus $L^2$ error, energy, momentum and angular momentum}
\label{quant_compare}
\end{figure}

From figure \ref{quant_compare} we observe that EMAC-Reg not only conserves energy, momentum, and angular momentum, but has significantly less $L^2$ error than the other formulations.  We note also that EMAC was shown in \cite{CHOR17} to have smaller $L^2$ error compared to several other formulations.  We also observe that each of these schemes conserves energy, as expected.  Momentum also appears to be conserved, noting the scale of the plot is $O(10^{-8})$.  SKEW and NS-$\alpha$ appear to have large variation on a smaller scale, whereas EMAC and EMAC-Reg stay constant over time.  

For angular momentum, as expected, SKEW and NS-$\alpha$ do not have constant angular momentum over time.  EMAC approximately conserves angular momentum, and EMAC-Reg does the best job of conserving it.

\subsection{2D Channel flow over a step}

The next experiment is flow past a forward-backward facing step.  We consider a 40 by 10 channel with a 1 by 1 step placed 5 units into the channel.  We enforce no-slip boundary conditions on the walls and the step, and use a constant (with respect to time) parabolic inflow and outflow with peak velocity 1, viscosity $\nu=\frac{1}{600}$, and no external force, i.e., $f=0$.

We ran this experiment using SKEW, EMAC, EMAC-Reg, and NS-$\alpha$.  We also ran SKEW on a fine mesh for reference, and note this solution matches the reference solution in \cite{LMNR08}.  For each model/scheme we used \textcolor{blue}{$(P_2,P_1)$ elements, Crank-Nicolson time stepping}, and Newton iterations to solve for the nonlinear term.  We used a timestep of $\Delta t=.025$ and ran until the end time $T=40$.  A spatial filter of $\alpha=\frac{1}{10}$ was used for EMAC-Reg and NS-$\alpha$.  There are 3810 degrees of freedom on the coarse mesh and 145K degrees of freedom on the reference mesh, and hence the coarse mesh simulations represent significantly underresolved simulations.

\begin{figure}[H]
\centering
\includegraphics[scale=.22]{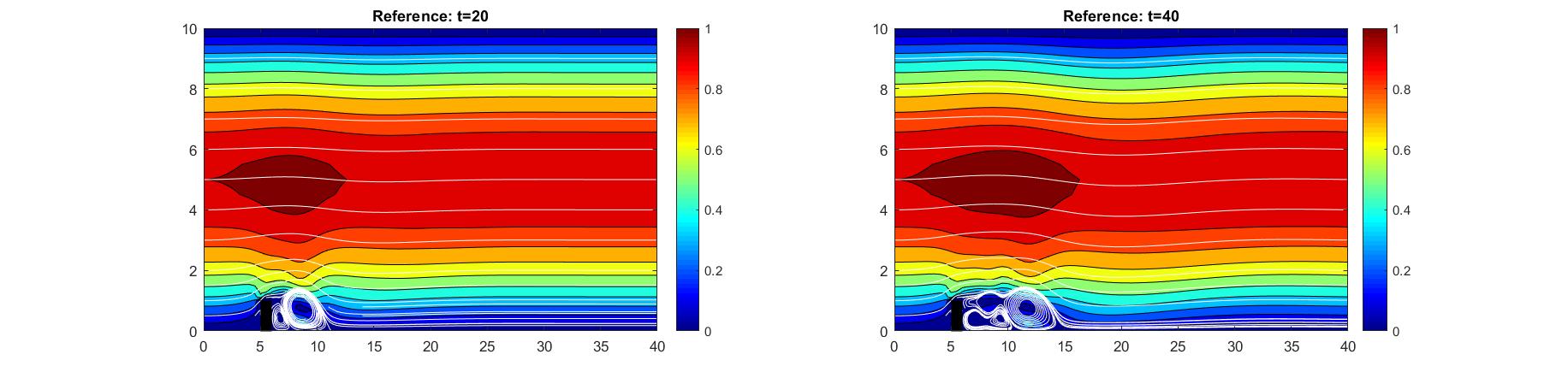}
\includegraphics[scale=.22]{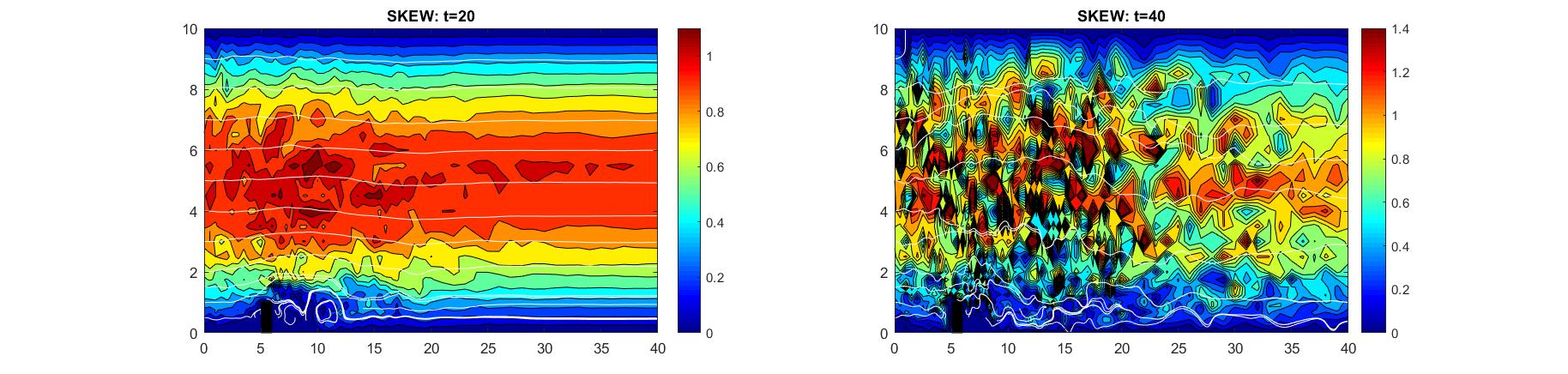}
\includegraphics[scale=.22]{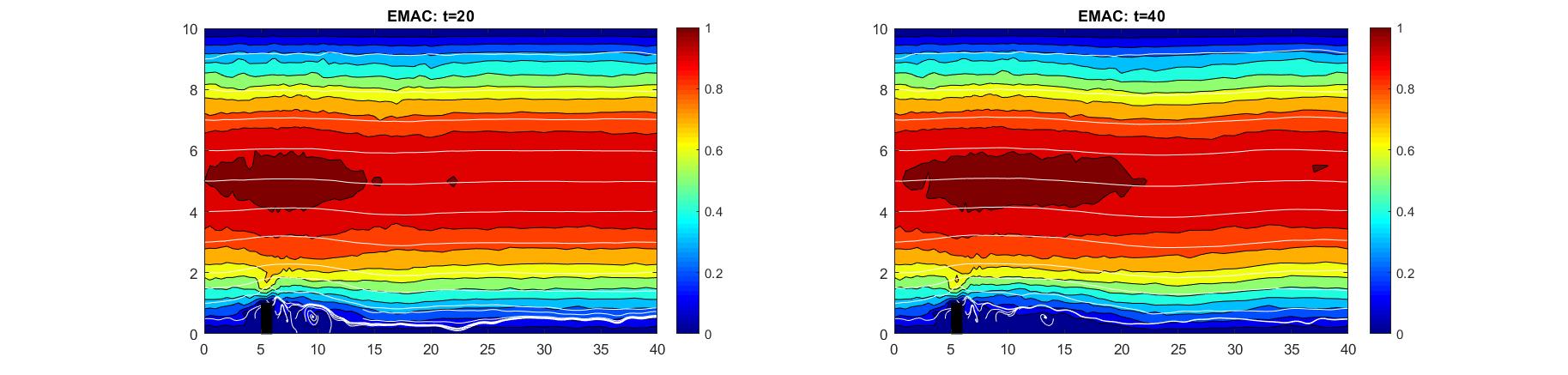}
\includegraphics[scale=.22]{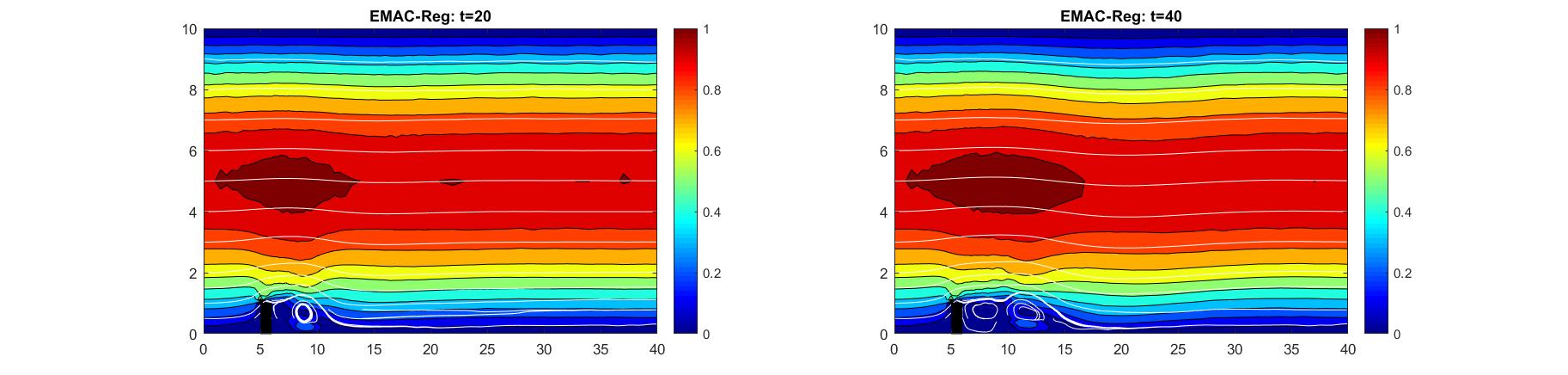}
\includegraphics[scale=.22]{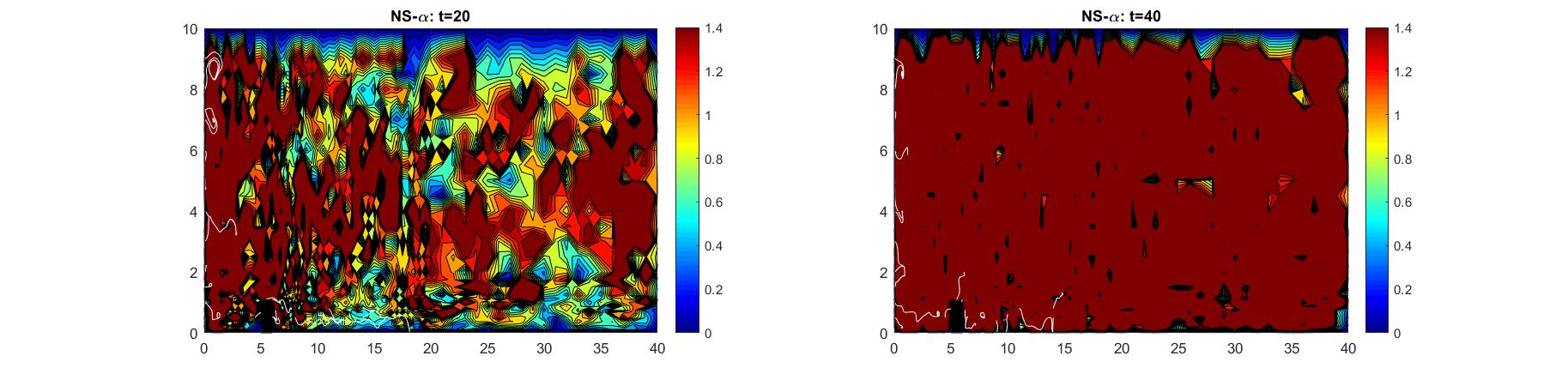}
\caption{Shown above are speed contours of our 4 different schemes' solution plotted at times t=20 and t=40 with SKEW on a fine mesh as reference}
\end{figure}
SKEW provides an accurate and smooth solution on the fine mesh that matches results from \cite{LR12} and we use as a reference solution.  However it does not perform well on the coarse mesh.

EMAC provides a better solution than SKEW on the coarse mesh, but still exhibits oscillations and does not predict eddy detachment.  EMAC-Reg provides the best coarse mesh solution, and qualitatively matches the reference solution quite well.  The NS-$\alpha$ coarse mesh solution is very poor.

\subsection{2D Kelvin-Helmholtz}

Our final numerical test is a benchmark problem from \cite{SJLLLS18} for 2D Kelvin-Helmholtz instability.  Our domain is the unit square with periodic boundary conditions at $x=0, \: 1$.  The no penetration boundary condition $u \cdot n = 0$ is strongly enforced at $y=0, \: 1$.  Weak enforcement of the free-slip condition is included as well.  The initial condition is defined as
\begin{align*}
u_0(x,y) = \bmat{u_{\infty}\tanh\pare{2y-1	\delta_0}\\ 0}+c_n\bmat{\partial_y \phi(x,y)\\ -\partial_x \phi(x,y)},
\end{align*}
where $\delta_0=\frac{1}{28}$ is the initial vorticity thickness, the reference velocity $u_{\infty}=1$, $c_n=10^{-3}$ the noise/scaling factor, and
\begin{align*}
\phi(x,y)=u_{\infty}\exp\pare{-\frac{(y-0.5)^2}{\delta_0^2}}\pare{\cos(8\pi x)  + \cos(20 \pi x)}.
\end{align*}
The Reynolds number in this case is defined by $Re=\frac{\delta_o u_{\infty}}{\nu}=\frac{1}{28\nu}$.

\textcolor{blue}{We compute solutions for $Re=1000$ using EMAC and EMAC-Reg; NS-$\alpha$ and SKEW} simulations using similar parameter choices as EMAC-Reg became unstable, and we were unable to compute to the endtime, even trying several choices of $\alpha$.  It is possible that the better results would be found with stabilizations \cite{C10} or divergence free elements \cite{BNP18}.  Taylor-Hood $(P_2,P_1)$ elements are used for spatial discretization and BDF2 for temporal discretization with a uniform triangulation with $h=\frac{1}{48}$ for \textcolor{blue}{EMAC and EMAC-Reg, which gave us 19K velocity degrees of freedom.  Note that we do lose some degree of property conservation by using BDF2 instead of Crank-Nicolson.  We also computed a reference solution, which was computed using EMAC with $h=\frac{1}{128}$.}  \textcolor{blue}{For EMAC-Reg we computed using $\alpha=\frac{h}{3}$.  We calculated up to $T=10$} using step size $\Delta t=.001$.  Newton's method was used to solve the nonlinear problems and were resolved typically in 2 to 3 steps.

\begin{figure}
\centering
\textcolor{blue}{Reference} \hspace{2.1cm} EMAC \hspace{2.1cm} EMAC-Reg\\
\subfloat{
	\includegraphics[scale=.175]{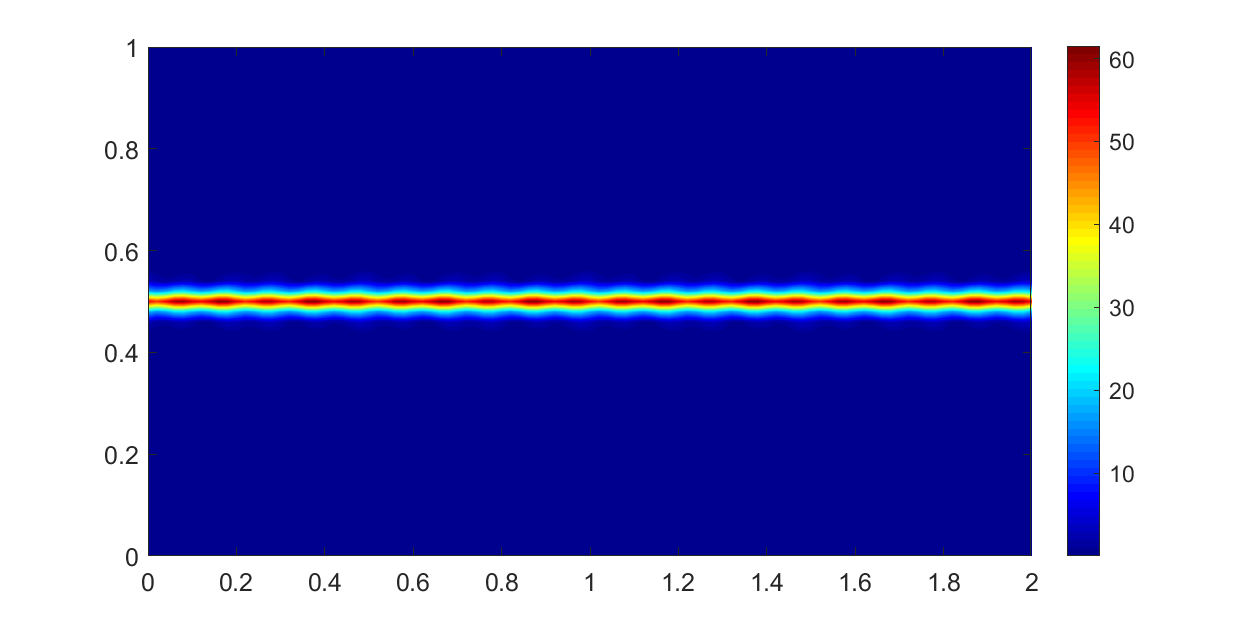}
}
\subfloat{
	\includegraphics[scale=.13]{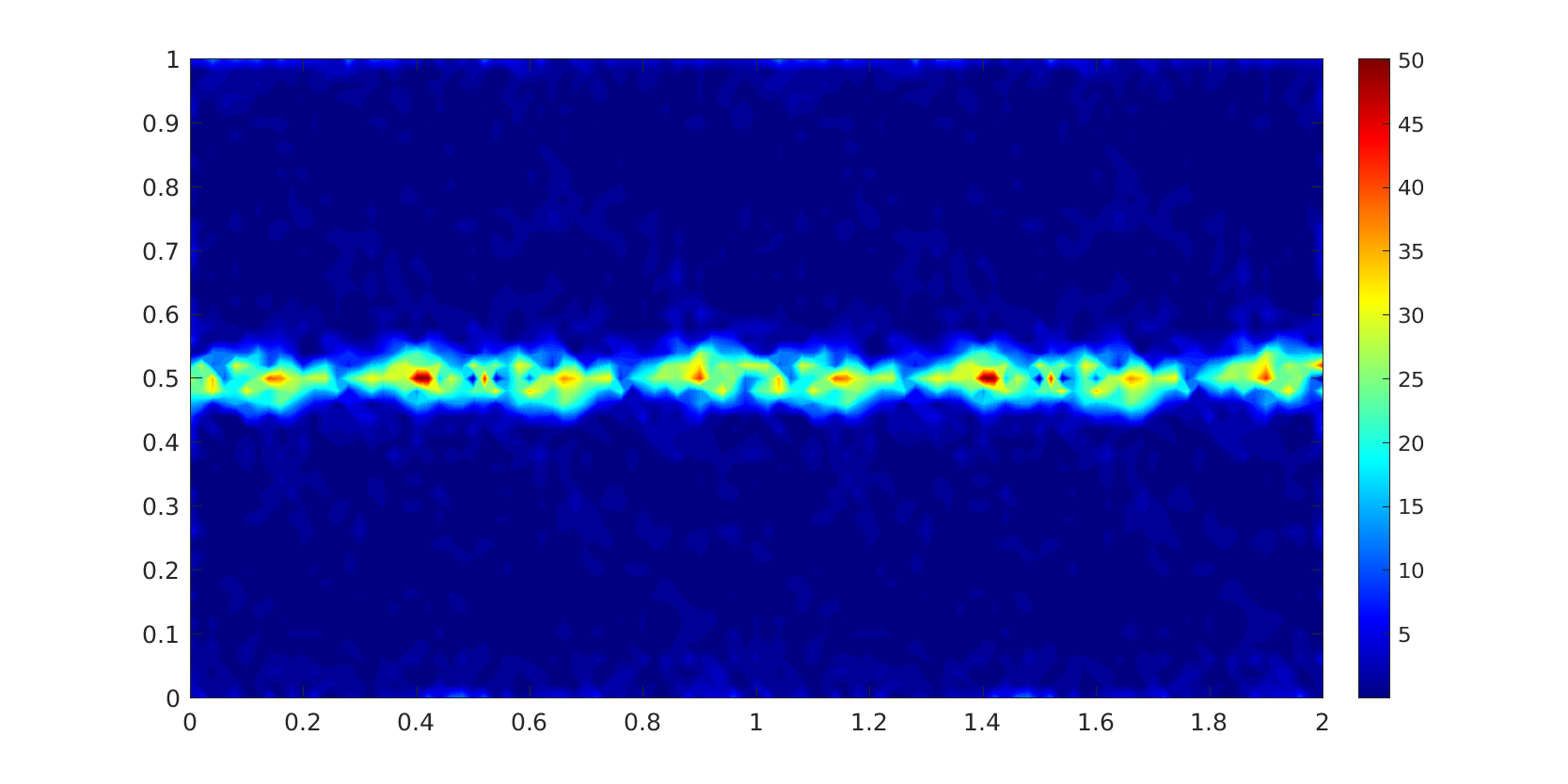}
}
\subfloat{
	\includegraphics[scale=.13]{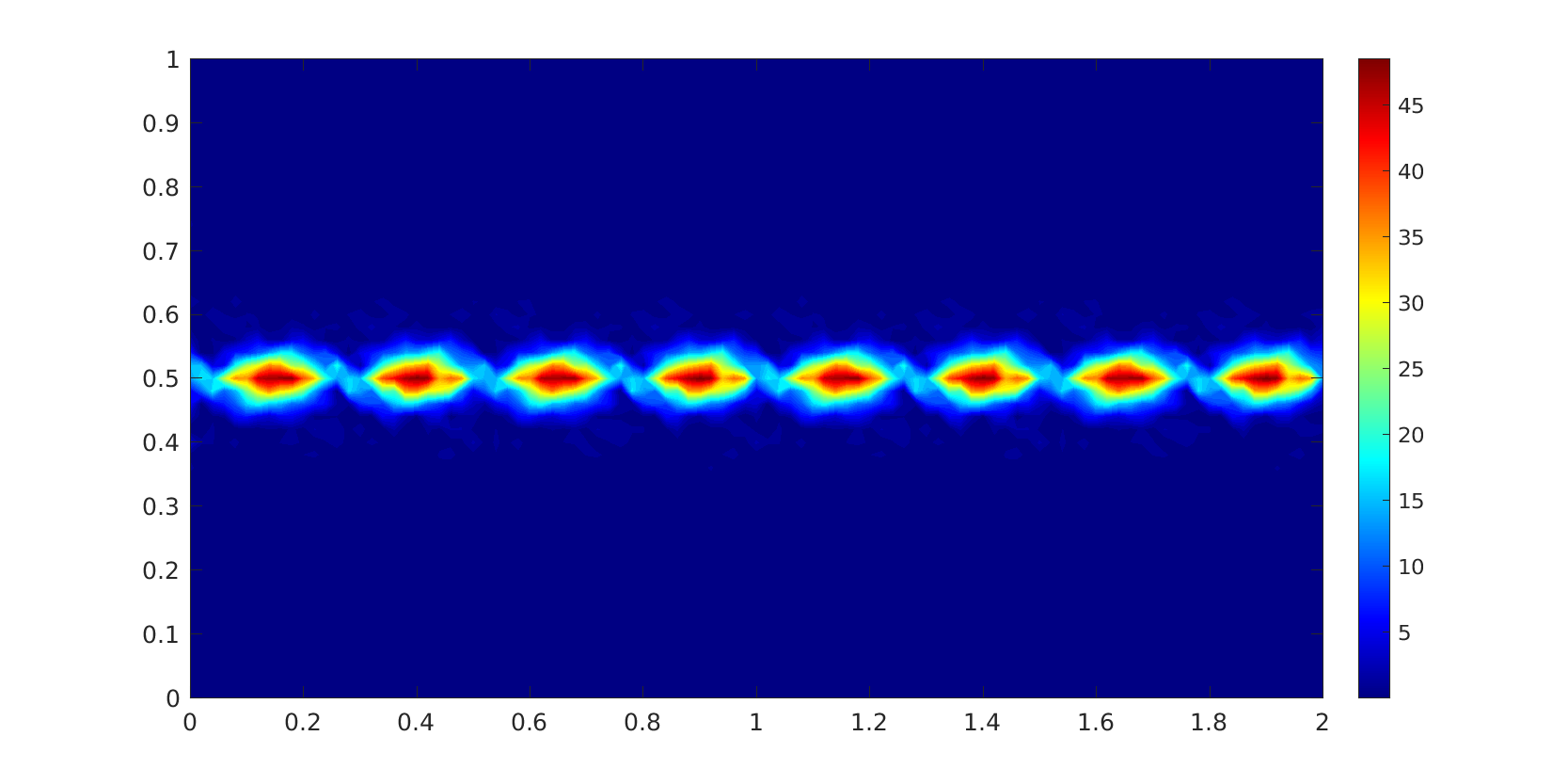}
}
\hspace{0mm}
\subfloat{
	\includegraphics[scale=.175]{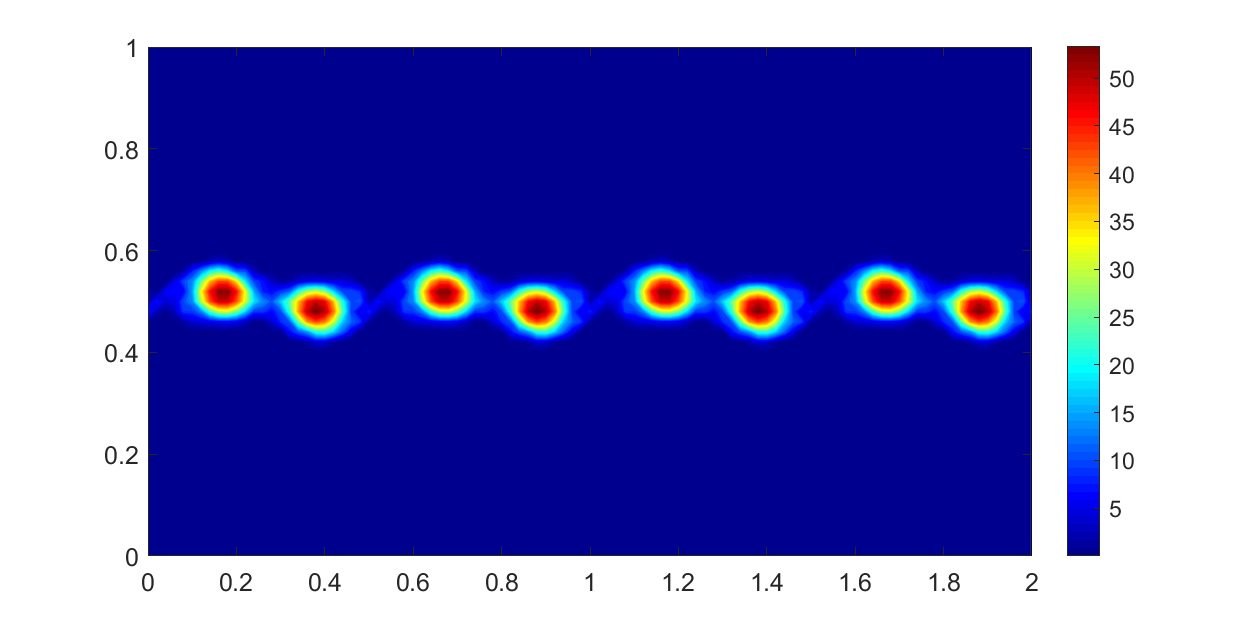}
}
\subfloat{
	\includegraphics[scale=.13]{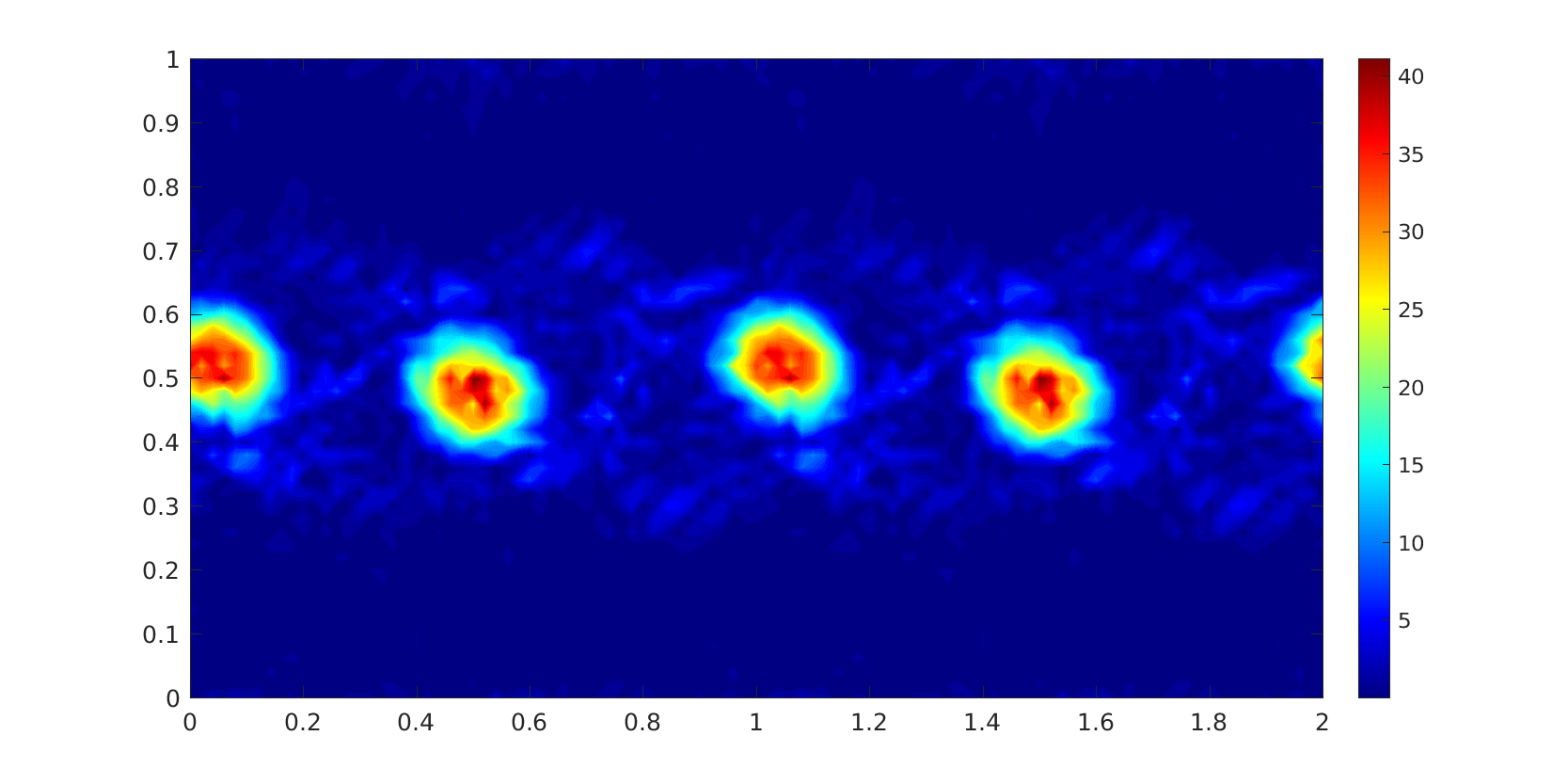}
}
\subfloat{
	\includegraphics[scale=.13]{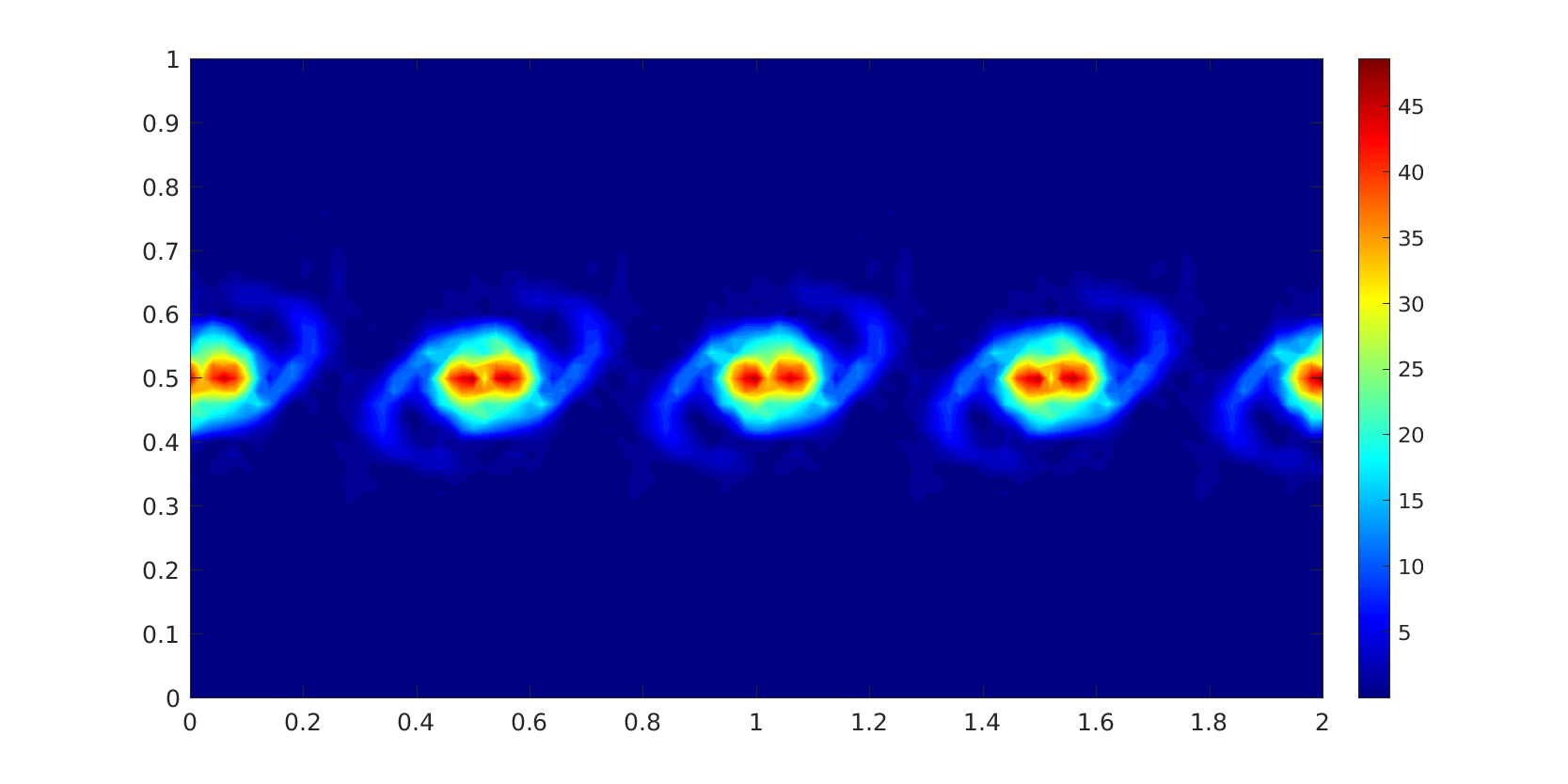}
}
\hspace{0mm}
\subfloat{
	\includegraphics[scale=.175]{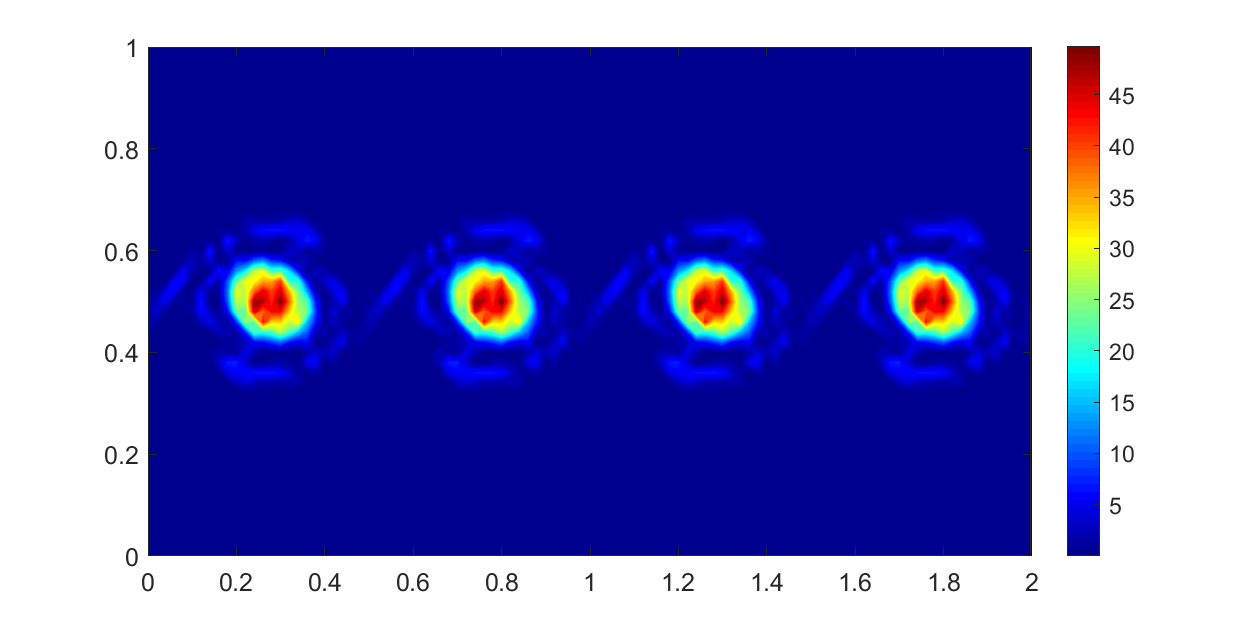}
}
\subfloat{
	\includegraphics[scale=.13]{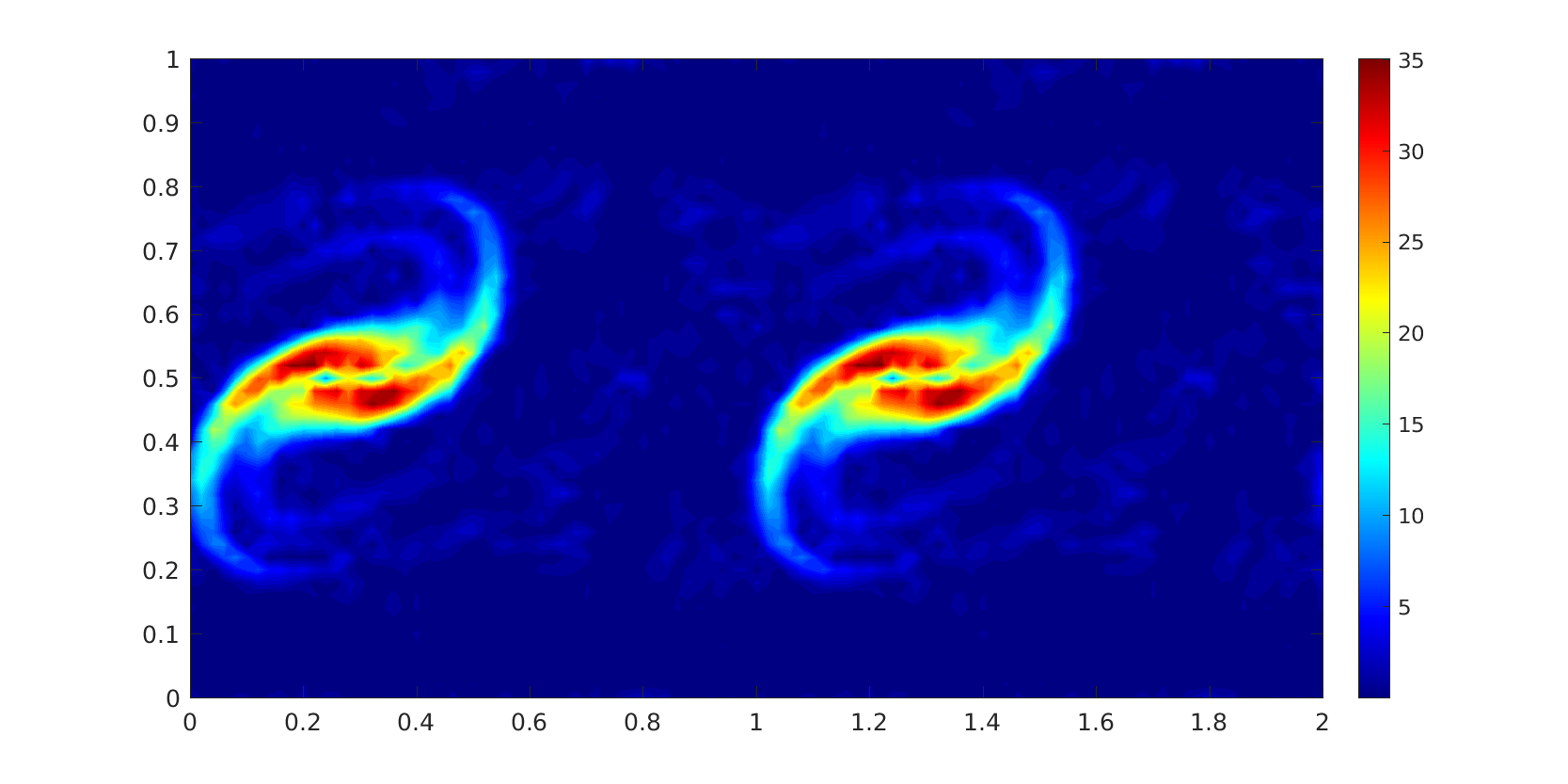}
}
\subfloat{
	\includegraphics[scale=.13]{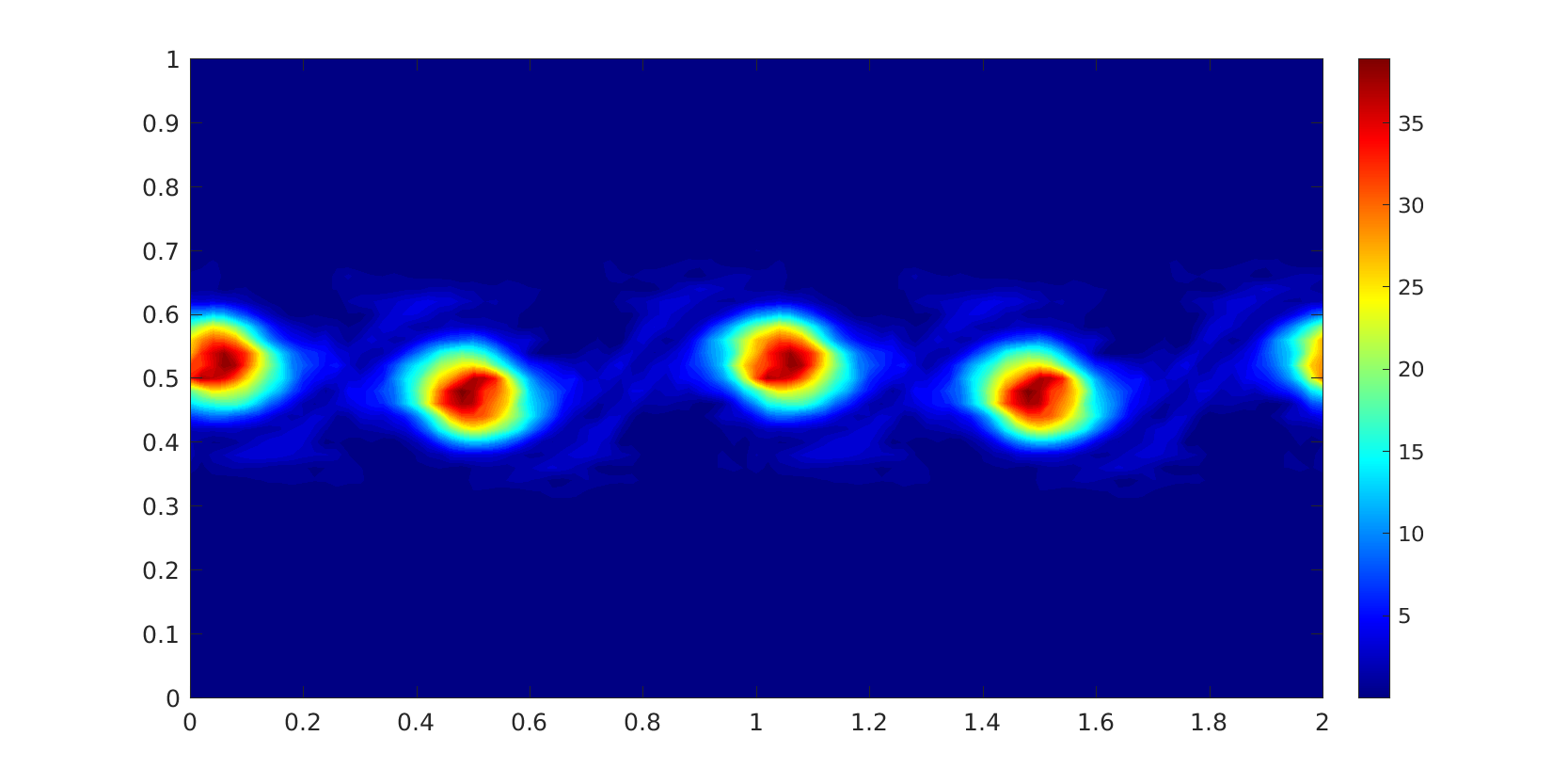}
}
\hspace{0mm}
\subfloat{
	\includegraphics[scale=.175]{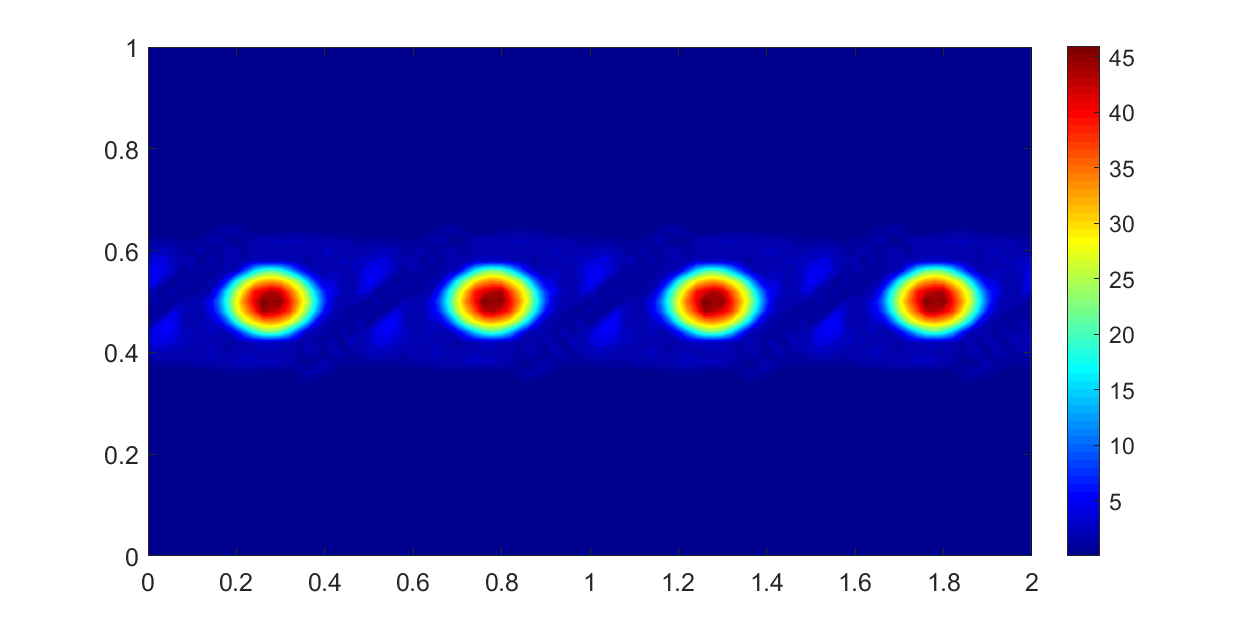}
}
\subfloat{
	\includegraphics[scale=.13]{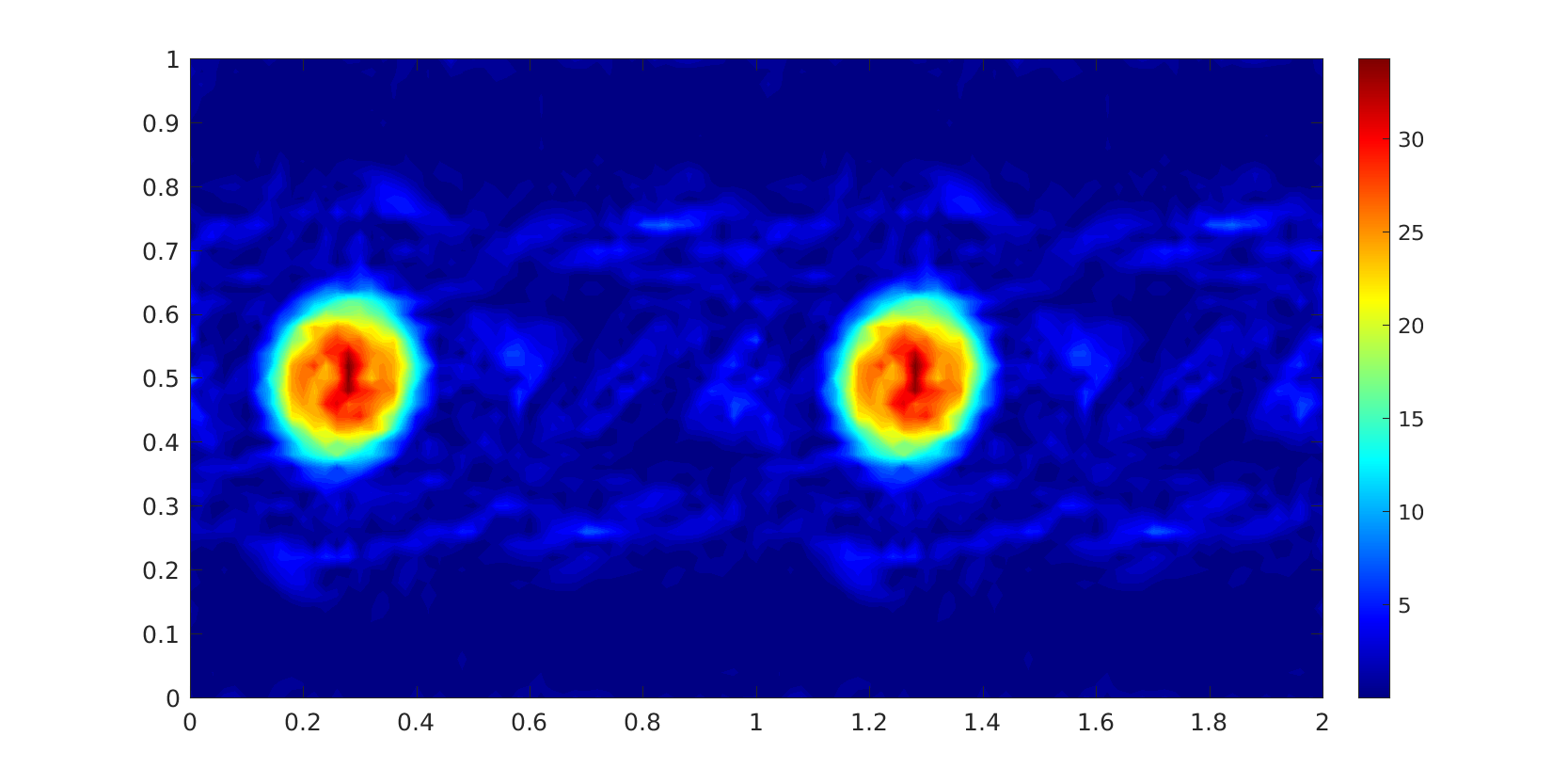}
}
\subfloat{
	\includegraphics[scale=.13]{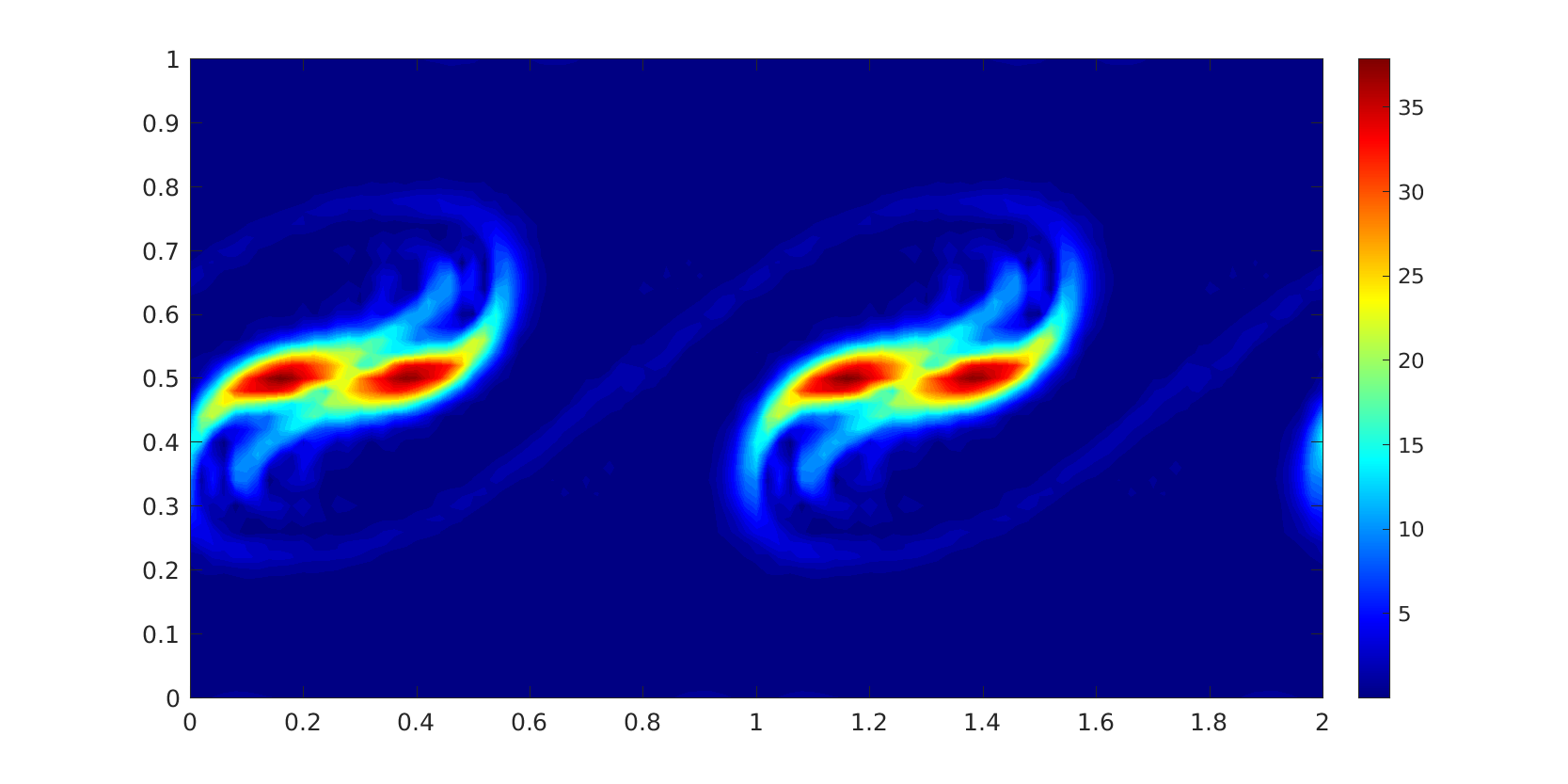}
}
\hspace{0mm}
\subfloat{
	\includegraphics[scale=.175]{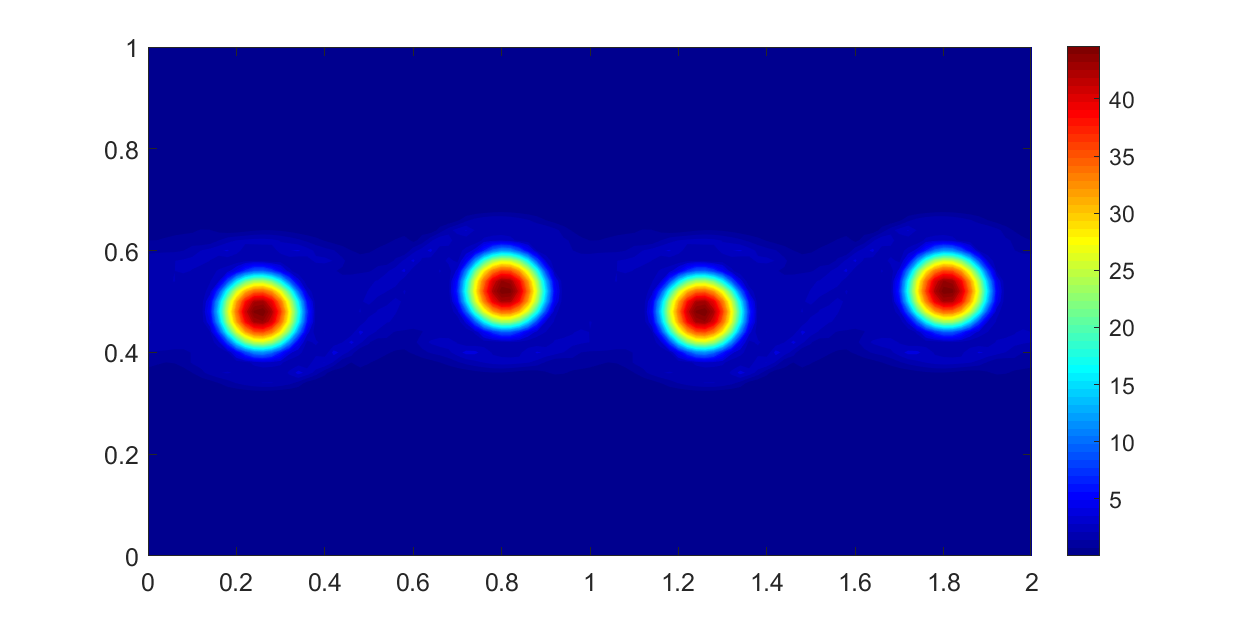}
}
\subfloat{
	\includegraphics[scale=.13]{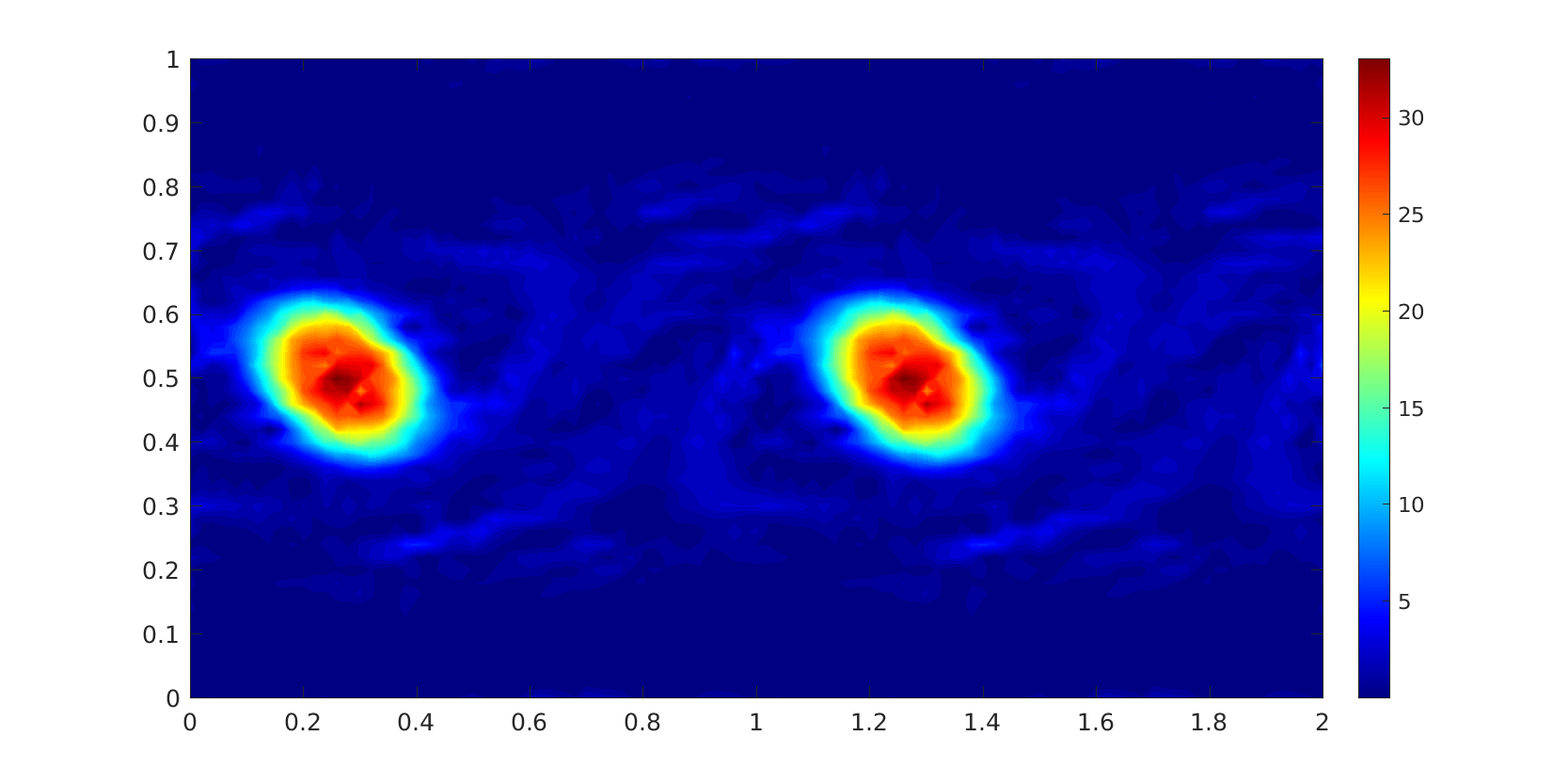}
}
\subfloat{
	\includegraphics[scale=.13]{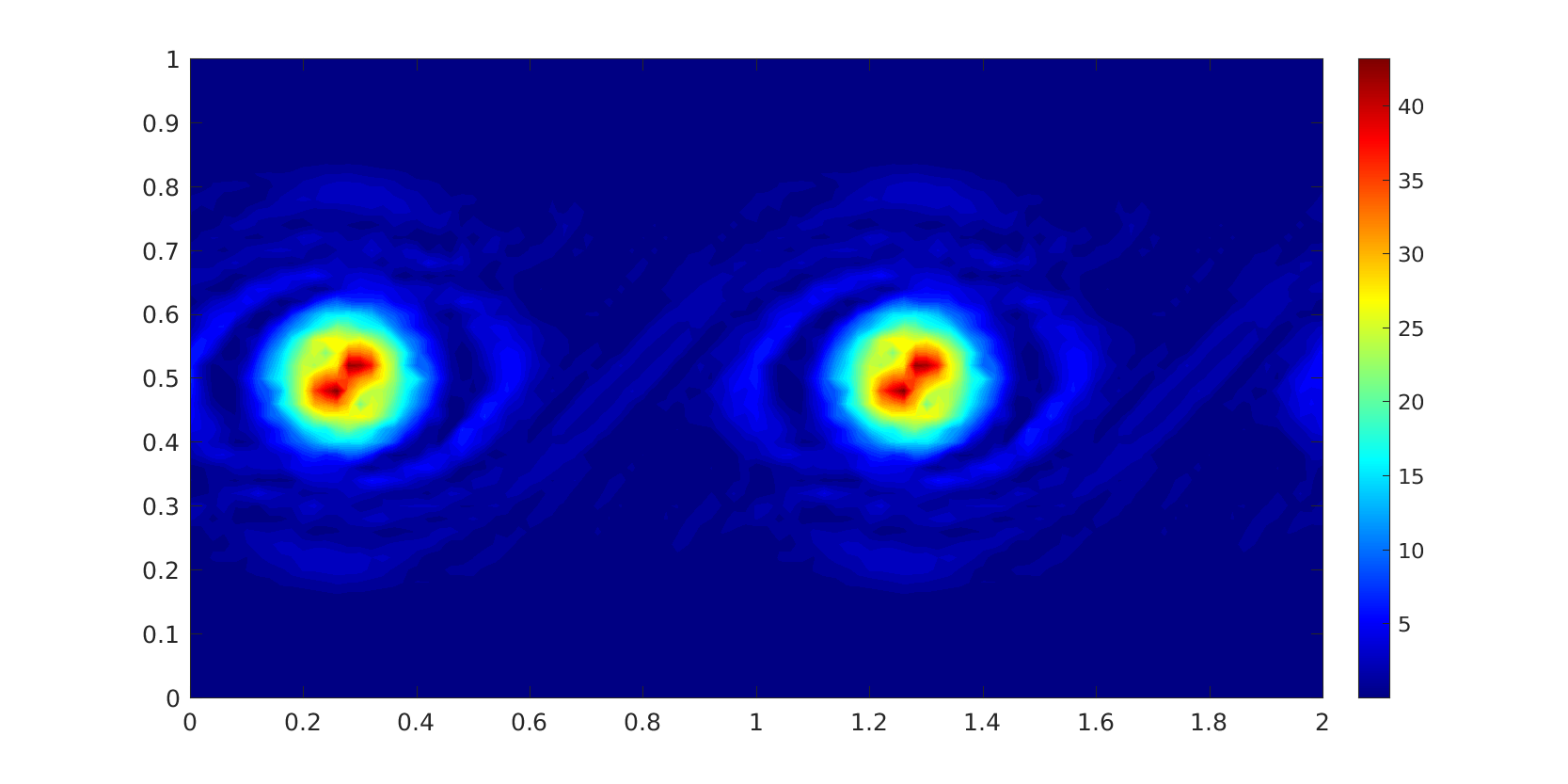}
}
\hspace{0mm}
\subfloat{
	\includegraphics[scale=.175]{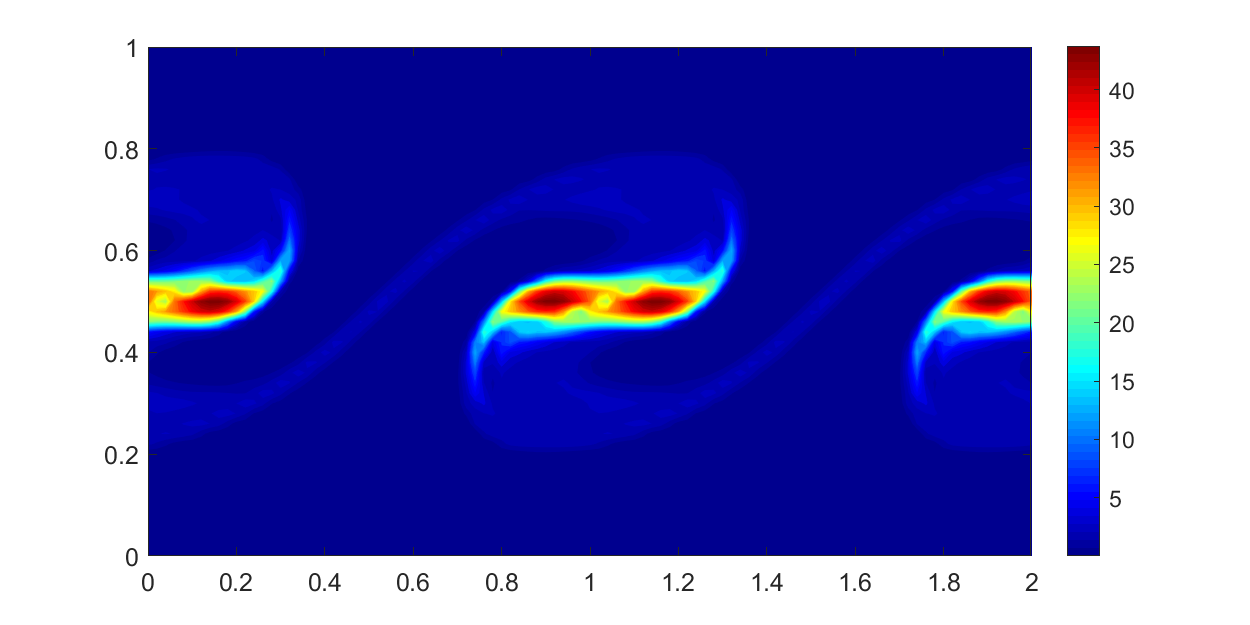}
}
\subfloat{
	\includegraphics[scale=.13]{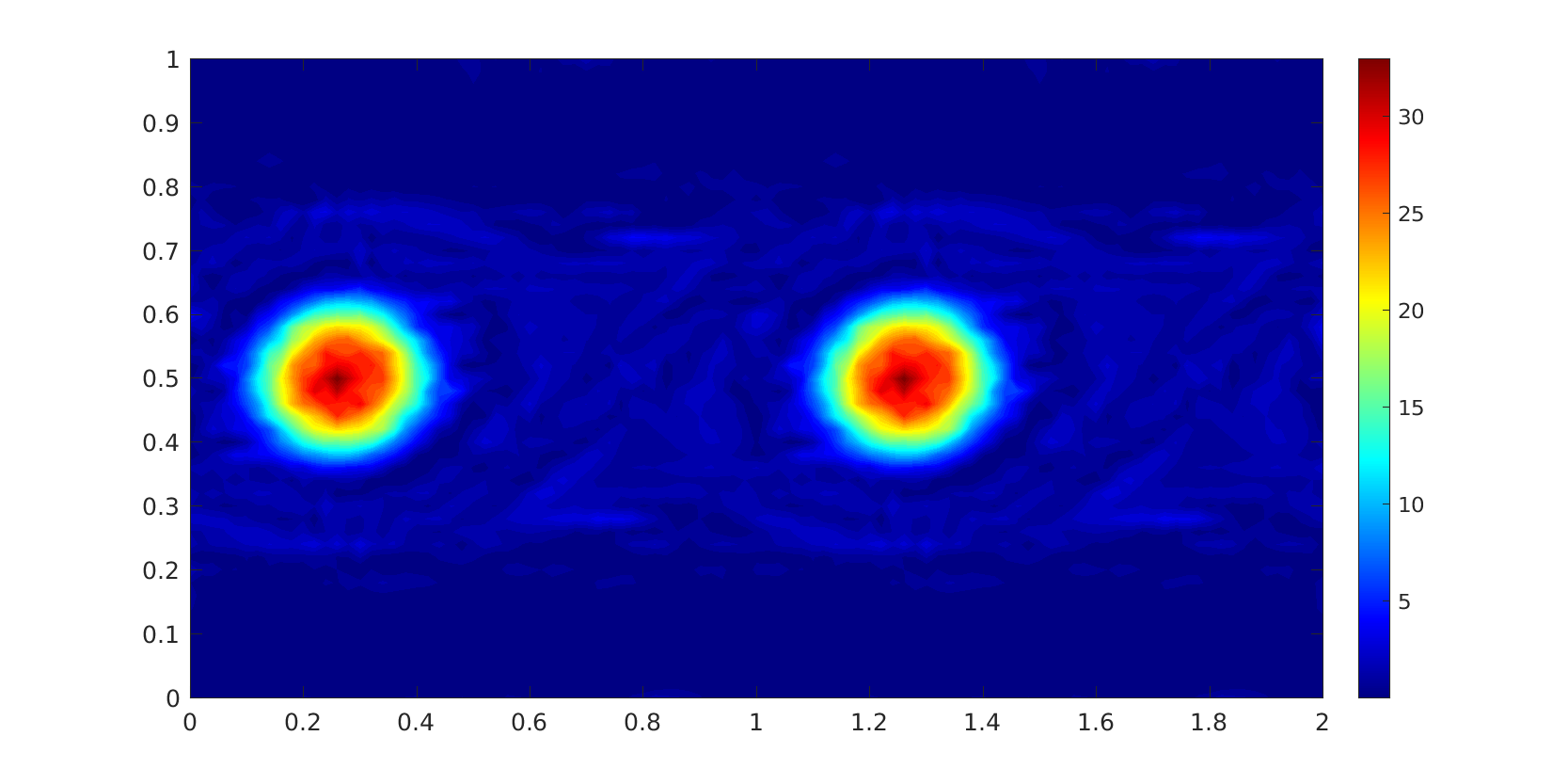}
}
\subfloat{
	\includegraphics[scale=.13]{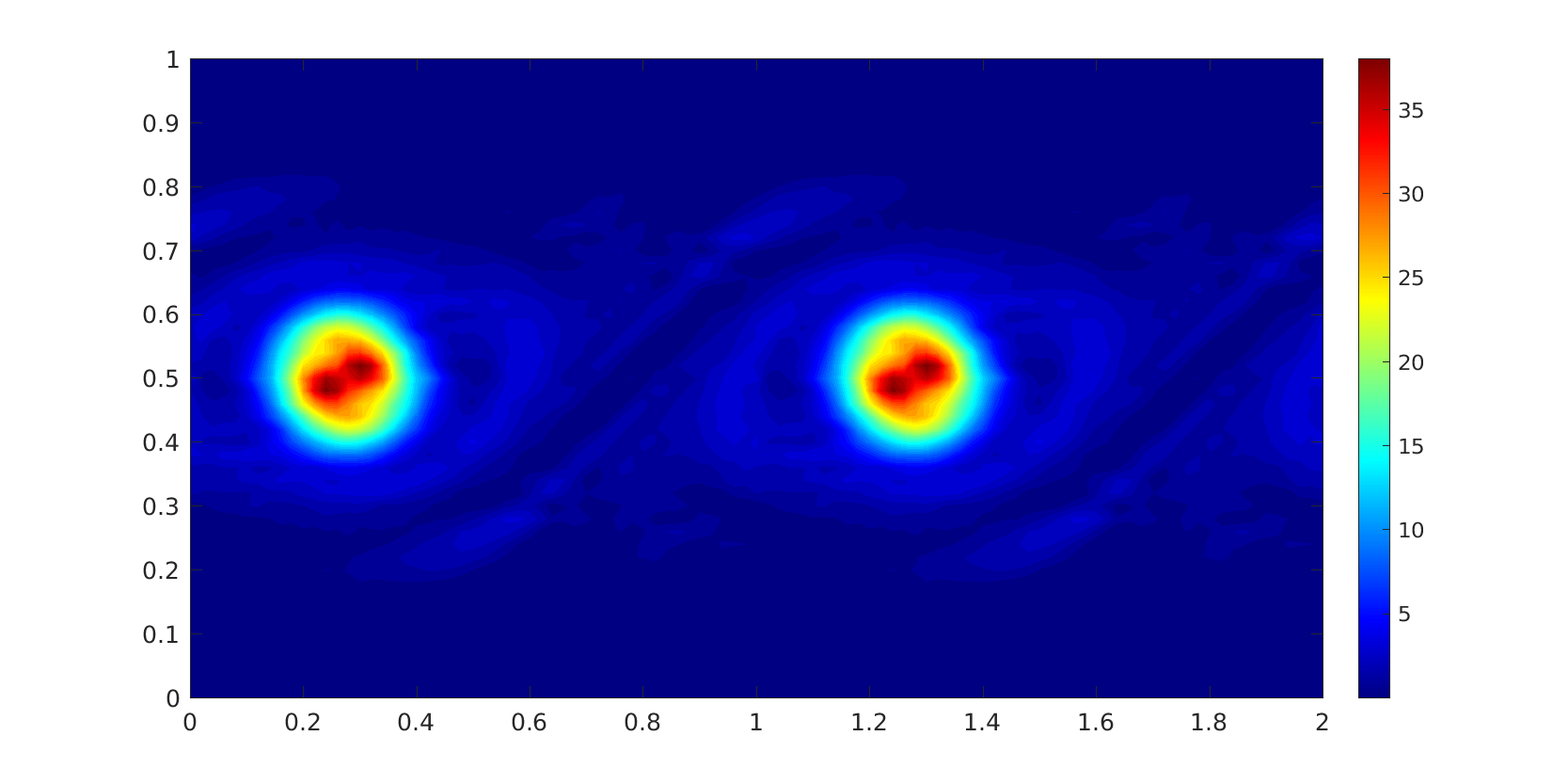}
}
\hspace{0mm}
\subfloat{
	\includegraphics[scale=.175]{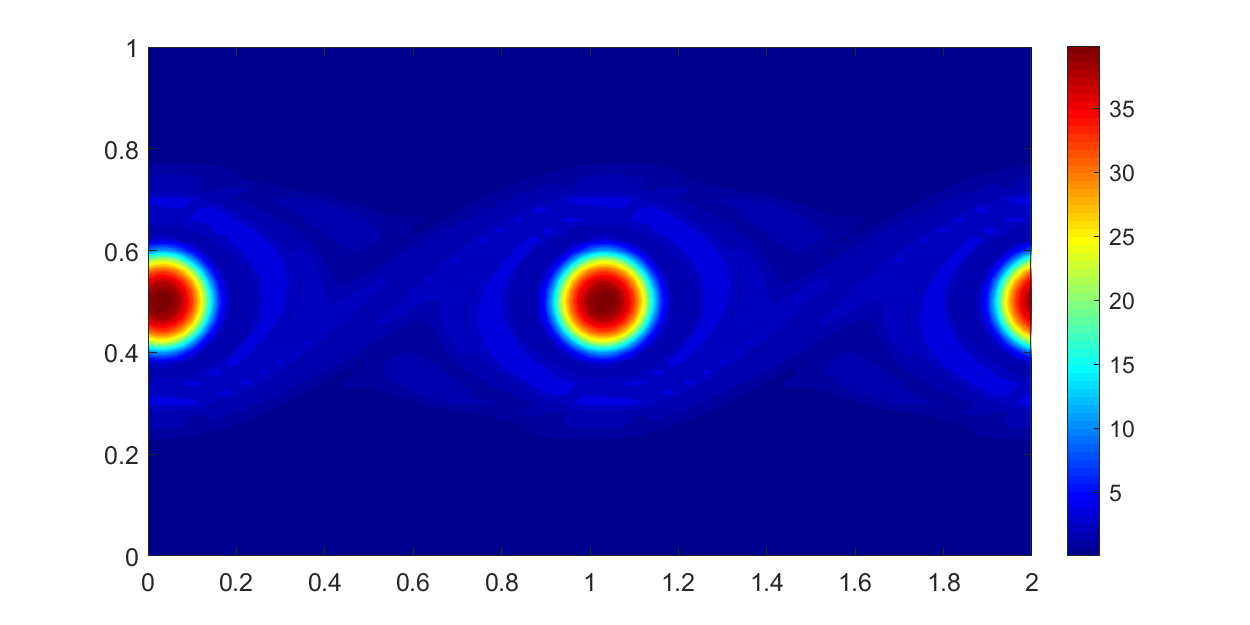}
}
\subfloat{
	\includegraphics[scale=.13]{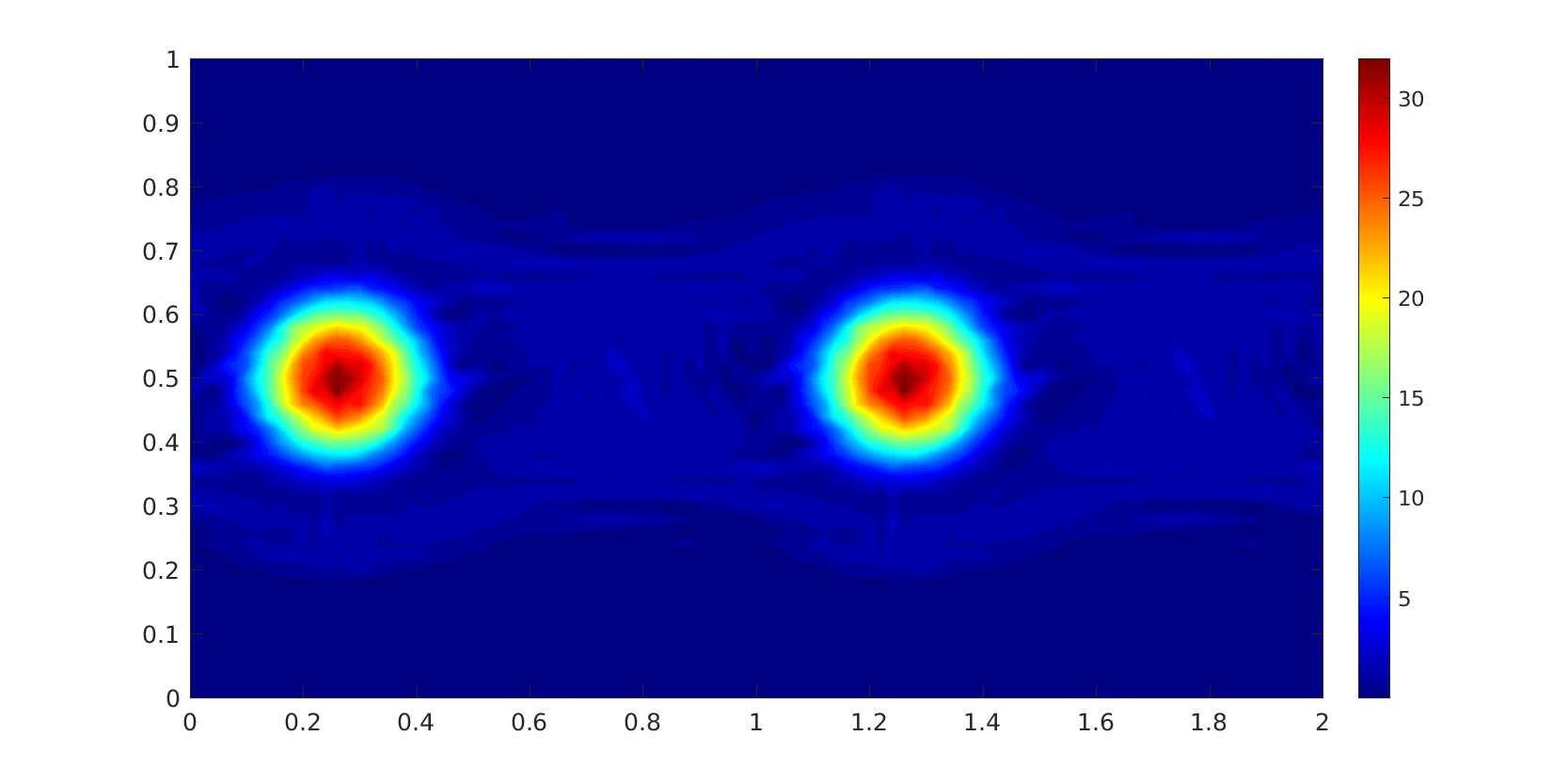}
}
\subfloat{
	\includegraphics[scale=.13]{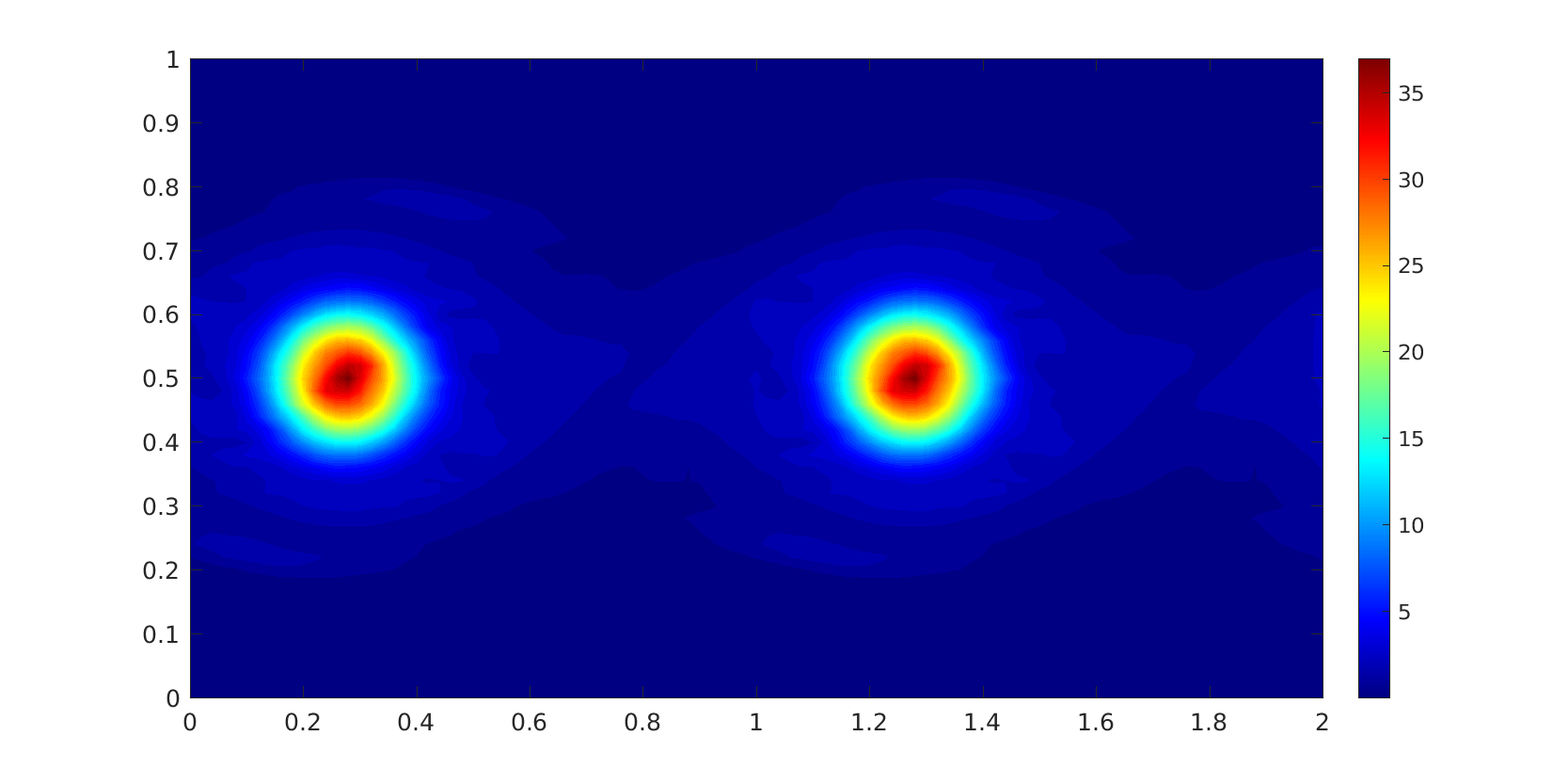}
}
\caption{\textcolor{blue}{Shown above are the vorticity contours for the reference solution (left), EMAC (middle), and EMAC-Reg (right) with mesh size 256 for the reference and 48 for EMAC and EMAC-Reg}, at times $t=$1, 2, 3, 4, 5, 6, 10, for $Re$=1000. \textcolor{blue}{Note the EMAC-Reg formulation used $\alpha=\frac{h}{3}$}.}
\label{fig:KH}
\end{figure}

Figure \ref{fig:KH} displays enstrophy contours at different times.  It was shown in \cite{OR20} that SKEW does not give resolved plots and show oscillations on a mesh significantly more refined than this one.  We can expect that on a coarser mesh that SKEW performs even more poorly, \textcolor{blue}{which was made clear during the experiment.}  

It was also shown in \cite{OR20} that EMAC performed well on a finer mesh, however on the coarse mesh it does not predict important physical phenomena that happens earlier on in the simulation.  Notice at time $t=1$, EMAC-Reg shows clear vortices, whereas EMAC is more grainy.  At times $t=3, \: 4$ we see a more refined plot from EMAC-Reg of the vortices' behavior than EMAC.

\textcolor{blue}{Comparing to the reference solution, the earlier times for EMAC-Reg closely resemble the resolved solution, much more than EMAC.}  As mentioned above, we observe several small vortices in the early phases of the experiment for EMAC-Reg, which is precisely how it is depicted in \cite{SJLLLS18}.  \textcolor{blue}{We also see that EMAC-Reg converges to a steady solution closer to the time where the reference converges.  We see convergence begin to form in the reference solution at $t=6$, whereas EMAC converges around $t=3$ and EMAC-Reg converges around $t=4$.}

\section{Conclusions}

We have introduced a new regularization model for fluid flow that not only \textcolor{blue}{conserves the same important physical quantities as EMAC using a properly chosen time integrator (Crank-Nicolson)}, but it also performs better than EMAC on coarser meshes.  We proved well-posedness of the semidiscrete EMAC-Reg scheme and provided an error analysis  that showed optimal rates and a Gronwall constant that does not depend on the Reynold's number.  We also showed numerically that EMAC-Reg outperforms related models on a coarser mesh in terms of $L^2$ error and accuracy over time.  Our results show the robustness of regularization models on coarser meshes along with models that conserve important physical quantities.

For future directions, further showing that EMAC-Reg conserves more quantities such as vorticity, enstrophy and helicity would make it more comparable to EMAC.  \textcolor{blue}{The theoretical analysis of the fully discrete scheme was not included in this paper, this is something that could be shown in a future work.}  We also only showed that energy, momentum, and angular momentum are only conserved given the proper time integrator.  It behooves us to show this for a general case in the future.  More numerical testing is also in order, in particular, testing EMAC-Reg in 3D.

\section*{Acknowledgments}

Clemson University is acknowledged for allowing a generous amount of computation time on the Palmetto cluster.

\bibliography{EMAC-Reg_bib_Rev_2}
\bibliographystyle{plain}

\begin{appendices}
\section{Momentum/angular momentum conservation of NS-$\alpha$ and Leray-$\alpha$ formulations}

Here we show that the NS-$\alpha$ and Leray-$\alpha$ formulations do not conserve momentum or angular momentum if $\div u\neq 0$ and $\div w\neq 0$ where $w$ represents the filtered velocity $\bar{u}$.  Further note $u=Fw$ where $F=-\alpha^2\Delta I+I$.

\subsection{NS-$\alpha$}
Recall the nonlinear term of the NS-$\alpha$ formulation is
\begin{align}
u_t+(\nabla \times u)\times w+\nabla p-\nu\Delta u=f.
\label{eq:nsalpha}
\end{align}
Test \eqref{eq:nsalpha} with $e_i$ for $i=1,2,3$.  After applying the space-time divergence theorem and rearranging some we get
\begin{align}
\lip{u_t}{e_i}+\lip{(\nabla \times u) \times w}{e_i}+\nu\lip{\nabla u}{\nabla e_i}=\lip{f}{e_i}.
\label{eq:nsalpha1}
\end{align}
Assuming $\nu=f=0$, \eqref{eq:nsalpha1} simplifies into
\begin{align*}
\frac{d}{dt}\lip{u}{e_i}+\lip{(\nabla \times u) \times w}{e_i}=0.
\end{align*}
If the nonlinear term is equal to zero, then we will have momentum conservation.  We now check this:
\begin{align*}
\lip{(\nabla \times u) \times w}{e_i}&=\lip{\nabla \times (w \times e_i)}{u}\\
&=\lip{(\nabla \cdot e_i)w}{u}-\lip{(\nabla \cdot w)e_i}{u}+\lip{e_i\cdot\nabla w}{u}-\lip{w\cdot \nabla e_i}{u},
\end{align*}
where the above two equalities come from vector identities.  Also note that because $e_i$ is a vector of scalars, $\lip{(\nabla \cdot e_i)w}{u}=\lip{w\cdot \nabla e_i}{u}=0$.  This leaves us with
\begin{align*}
\lip{(\nabla \times u) \times w}{e_i}=-\lip{(\nabla \cdot w)e_i}{u}+\lip{e_i\cdot\nabla w}{u},
\end{align*}
which we cannot conclude is zero, hence we cannot say that the NS-$\alpha$ formulation preserves momentum.

For angular momentum, we test \eqref{eq:nsalpha} with $\phi_i$ and the algebra works out similar to momentum,
\begin{align*}
\lip{(\nabla \times u) \times w}{\phi_i}=\lip{(\nabla \cdot \phi_i)w}{u}-\lip{(\nabla \cdot w)\phi_i}{u}+\lip{\phi_i\cdot\nabla w}{u}-\lip{w\cdot \nabla \phi_i}{u}.
\end{align*}
Since $\nabla \cdot \phi_i=0$ for $i=1,2,3$, we have 
\begin{align*}
\lip{(\nabla \cdot \phi_i)w}{u}=0.
\end{align*} 
Also recall using \eqref{eq:phi} in Theorem \ref{Conservation_Thm}, we have
\begin{align*}
\lip{w \cdot \nabla \phi_i}{u}=0.
\end{align*} 
This gives us
\begin{align*}
\lip{(\nabla \times u) \times w}{\phi_i}=-\lip{(\nabla \cdot w)\phi_i}{u}+\lip{\phi_i\cdot\nabla w}{u}.
\end{align*}
Much like with momentum, we cannot conclude that this quantity is zero, and we expect it is not zero.

\subsection{Leray-$\alpha$}

Recall the nonlinear term of the Leray-$\alpha$ formulation is
\begin{align}
u_t+w\cdot \nabla u+\nabla p-\nu\Delta u=f.
\label{eq:leray}
\end{align}
We test \eqref{eq:leray} with $e_i$ for $i=1,2,3$ and integrate.  Similar to \eqref{eq:nsalpha1}
\begin{align}
\lip{u_t}{e_i}+\lip{w\cdot \nabla u}{e_i}+\nu\lip{\nabla u}{\nabla e_i}=\lip{f}{e_i}.
\label{eq:leray1}
\end{align}
Assuming $\nu=f=0$, \eqref{eq:leray1} simplifies to
\begin{align*}
\frac{d}{dt}\lip{u}{e_i}+\lip{w\cdot \nabla u}{e_i}=0.
\end{align*}
If the nonlinear term is equal to zero, then we will have momentum conservation.  Using \eqref{eq:form1} on the nonlinear term we get
\begin{align*}
\lip{w\cdot \nabla u}{e_i}&=-\lip{w\cdot \nabla e_i}{u}-\lip{(\nabla \cdot w)u}{e_i}\\
&=-\lip{(\nabla \cdot w)u}{e_i},
\end{align*}
which is not zero when $\divergence w \neq 0$.  Hence momentum is not necessarily conserved.

For angular momentum we test \eqref{eq:leray} with $\phi_i$ for $i=1,2,3$ and it simplifies to
\begin{align*}
\frac{d}{dt}\lip{u}{\phi_i}+\lip{w\cdot \nabla u}{\phi_i}=0.
\end{align*}
Now similarly to the momentum proof, we have for the nonlinear term
\begin{align*}
\lip{w\cdot \nabla u}{\phi_i}&=-\lip{w\cdot \nabla \phi_i}{u}-\lip{(\nabla \cdot w)u}{\phi_i}\\
&=-\lip{(\nabla \cdot w)u}{\phi_i},
\end{align*}
where the first term disappears by applying \eqref{eq:phi} similarly to Theorem \ref{Conservation_Thm} (and the angular momentum proof for NS-$\alpha$ in Appendix A.1).  Thus the nonlinear term does not vanish, so angular momentum is not conserved.

\end{appendices}

\end{document}